\def\ds{\displaystyle}
\newcommand{\bea}{\begin{eqnarray}}
\newcommand{\eea}{\end{eqnarray}}
\newcommand{\beas}{\begin{eqnarray*}}
\newcommand{\eeas}{\end{eqnarray*}}
\newcommand{\beq}{\begin{equation}}
\newcommand{\eeq}{\end{equation}}
\title{An ADI Crank-Nicolson Orthogonal Spline Collocation Method for the Two-Dimensional Fractional Diffusion-Wave Equation\thanks{This research was supported in part by the National Nature Science Foundation of China (contract grant 11271123) and the Research and Innovation Project for College Graduates of Hunan Province (contract grant CX2012B196).}}
\author{Graeme Fairweather\thanks{Corresponding author. Current address: Mathematical Reviews, American Mathematical Society, 416 Fourth Street, Ann Arbor, MI 48103, USA.({\tt gxf@ams.org}).}
\and Xuehua Yang\thanks{College of Mathematics and Computer Science, Key Laboratory of High Performance Computing and Stochastic Information Processing (Ministry of Education of China), Hunan Normal University, Changsha, Hunan 410081, P. R. China ({\tt hunanshidayang@163.com}).}
  \and Da Xu\thanks{College of Mathematics and Computer Science, Hunan Normal University, Changsha, Hunan 410081, P. R. China ({\tt daxu@hunnu.edu.cn}).}
\and Haixiang Zhang\thanks{School of Mathematics, Central South University, Changsha, Hunan 410075, P. R. China ({\tt hassenzhang@163.com}).}}
\begin{document}

\maketitle

\begin{abstract}
A new method is formulated and analyzed for the approximate solution of a two-dimensional time-fractional diffusion-wave equation. In this method, orthogonal spline collocation is used for the spatial discretization and, for the time-stepping, a novel alternating direction implicit (ADI) method based on the Crank-Nicolson method combined with the $L1$-approximation of the time Caputo derivative of order $\alpha\in(1,2)$. It is proved that this scheme is stable, and of optimal accuracy in various norms. Numerical experiments demonstrate the predicted global convergence rates and also superconvergence.
\end{abstract}

\begin{keywords}
Two-dimensional fractional diffusion-wave equation, Caputo derivative, alternating direction implicit method, orthogonal spline collocation method, Crank-Nicolson method, stability, optimal global convergence estimates, superconvergence

\end{keywords}

\begin{AMS}
65M70, 65M12, 65M15, 35R11
\end{AMS}

\pagestyle{myheadings}
\thispagestyle{plain}
\markboth{GRAEME FAIRWEATHER, XUEHUA YANG, DA XU AND HAIXIANG ZHANG}{AN ADI OSC METHOD FOR THE FRACTIONAL DIFFUSION-WAVE EQUATION}

\section{Introduction}
In this paper, we focus on the formulation and analysis of an alternating direction implicit (ADI) orthogonal spline collocation (OSC) method for the approximate solution of the two-dimensional time-fractional diffusion-wave problem
\begin{equation}\label{eq:1}
_0^CD_t^{\alpha}u(x,y,t)=\Delta u(x,y,t)+f(x,y,t), \quad (x,y,t)\in \Omega_T\equiv\Omega\times(0,T],
\end{equation}
 with the initial conditions
\begin{eqnarray}\label{eq:2}
u(x,y,0)=\varphi(x,y),\quad D_t u(x,y,0)=\phi(x,y),\quad  (x,y)\in \overline{\Omega}=\Omega\cup\partial\Omega,
\end{eqnarray}
and the boundary condition
\begin{eqnarray}\label{eq:3}
u(x,y,t)=0,\quad (x,y,t)\in \partial\Omega\times(0,T].
\end{eqnarray}
Here, $\Delta$ is the Laplace operator,
$\Omega=(0,1)\times(0,1)$ with boundary $\partial \Omega$,
$\varphi(x,y),\phi(x,y)$ and $f(x,y,t)$ are
given sufficiently smooth functions in their respective domains. Also, $_0^CD_t^{\alpha}u(x,y,t)$ is the Caputo fractional derivative of order $\alpha$ $(1<\alpha<2)$ defined by
\begin{equation}\label{eq:4}
_0^CD_t^{\alpha}u(x,y,t)=\frac{1}{\Gamma(2-\alpha)}\int_0^t\frac{\partial^2 u(x,y,s)}{\partial s^2}\frac{ds}{(t-s)^{\alpha-1}},
\end{equation}
where $\Gamma(\cdot)$ denotes the Gamma function, named after Caputo \cite{Ca} who was one of the first to use this operator in applications and to investigate some of its properties. Equation (\ref{eq:1}) is called a time-fractional partial differential equation of order $\alpha$ since it is intermediate between the diffusion equation ($\alpha = 1$)
and the wave equation ($\alpha = 2$).  In recent years, fractional partial differential equations have gained rapidly in popularity and importance as new modeling tools in a variety of fields, such as physics, biology, mechanical engineering, environmental science, signal processing, systems identification, electrical and control theory, finance, and hydrology; see, for example, \cite{Uc1,Uc2}. In particular, the fractional diffusion-wave equation (\ref{eq:1}) models wave propagation in viscoelastic materials.

Several approaches have been proposed for the solution of fractional partial differential equations in one and several space variables; see, for example, \cite{BaDiScTr,cuimingrong2,JiLaZh,MuMc,ReSu}
and references in these papers. In particular, alternating direction implicit (ADI) methods have been employed recently for the solution of multidimensional problems.
ADI methods were first introduced in the context of finite difference methods (FDMs) for parabolic and elliptic problems by Peaceman and Rachford \cite{ADIqiyuan2} in the 1950s, and such methods in conjunction with various types of spatial discretizations continue to be studied extensively today, especially for the numerical solution of time-dependent problems; see \cite{FeFajcp} and references therein.
The attraction of these techniques is that they replace the solution of multidimensional problems by sequences of one-dimensional problems, thus  reducing the computational cost.
For solving fractional problems in two space variables, ADI methods have been employed in numerous contexts. Meerschaert et al.,  \cite{M3eScTa} formulated an ADI FDM based on the backward Euler method to solve a class of space-fractional partial differential
equations with variable coefficients, and, for the same problem, Tadjeran and Meerschaert \cite{cuiminghou32}
derived an ADI method based on the Crank-Nicolson finite difference method, and used Richardson extrapolation to improve the spatial accuracy.
For a space-fractional advection-dispersion equation, Chen and Liu \cite{liufawangADI1} considered an ADI FDM backward Euler method and obtained second-order accuracy in both space and time on using Richardson extrapolation.
Zhang and Sun \cite{sunzhizhong7} formulated and analyzed two ADI FDMs based on the $L1$ approximation \cite{OlSp} and the backward Euler method for the time-fractional sub-diffusion equation comprising (\ref{eq:1})--(\ref{eq:3}) with $0 < \alpha <1$.  These methods are proved to be second-order in space and of order $\min(2\alpha,2-\alpha)$ and $\min(1+\alpha,2-\alpha)$, respectively, in time.
Zhang et al., \cite{sunzhizhong2012} formulated and analyzed a compact ADI FDM and a Crank-Nicolson ADI FDM for the time-fractional diffusion-wave equation and proved that the methods are fourth-order accurate in space and of order ${3-\alpha}$ in time. For the same problem but with Neumann boundary conditions,
Ren and Sun \cite{ReSu} formulated similar ADI methods of the same accuracy.
Wang and Wang \cite{cuiminghou35} formulated an ADI FDM for a class of space-fractional diffusion equations. They provided no analysis of the method but demonstrated its efficiency.
Cui considered compact ADI FDMs for a time-fractional diffusion equation with the Riemann-Liouville fractional derivative in \cite{cuimingrong1} and the Caputo derivative
in \cite{cuimingrong2}.
ADI FDMs have also been used in the solution of three-dimensional fractional problems. In particular,
Liu et al., \cite{liufawangADI2} proposed such a
scheme for the solution of a fractional equation governing seepage flow, and used Richardson extrapolation to improve the spatial accuracy. Also, Yu et al., \cite{liufawangADI3} constructed an ADI FDM method for the fractional Bloch-Torrey equation to study anomalous diffusion in the human brain.

Orthogonal spline collocation has evolved as a valuable technique for the solution of several types of partial differential
equations \cite{BiFa}, especially in combination with ADI methods for multidimensional problems \cite{FeBiFa}. The popularity of OSC methods is due in part to their conceptual
simplicity, wide applicability and ease of implementation. A well-known advantage of OSC methods over finite element Galerkin methods is that the calculation of the coefficients in the equations determining the approximate solution is very fast, since no integrals need to be evaluated
or approximated. Another attractive feature of OSC methods is their superconvergence properties; see, for example, \cite{fairweather2010}.

A brief outline of the remainder of this paper is as follows. In section 2, standard
notation and basic lemmas
are presented.
The ADI OSC Crank-Nicolson method for the solution of problem (\ref{eq:1})--(\ref{eq:3}) is formulated in section 3, followed by a stability analysis of the scheme in section 4. In section 5, we derive error estimates in the $H^{\ell}$ norm, $\ell=0,1,2$, at each time step. In section 6, we present the results of numerical
experiments which support the
analytical rates of convergence and exhibit superconvergence. Some concluding remarks are provided in section 7.

\section{Preliminaries}
In this section, we introduce standard notation used in the formulation of OSC methods, and basic lemmas used in their analysis.

For positive integers $r$, $N_x$, $N_y$, let
$\delta_x=\{x_i\}_{i=0}^{N_x}$ and $\delta_y=\{y_j\}_{j=0}^{N_y}$ be two partitions of
$\overline{I}=[0,1]$ such that
\[
0=x_0<x_1<...<x_{N_x}=1,\qquad 0=y_0<y_1<...<y_{N_y}=1.
\]
Set
\[
\begin{array}{lll}I_k^x=(x_{k-1},x_{k}), &h_k^x=x_{k}-x_{k-1}, &1\leq k\leq N_x,\\\\
I_l^y=(y_{l-1},y_{l}),&h_l^y=y_{l}-y_{l-1}, &1\leq l\leq N_y,
\end{array}
\]
and 
$h=\max\left(\max \limits_{1\leq k\leq N_x} h_k^x,\max \limits_{1\leq l\leq N_y} h_l^y\right)$.
It is assumed that the collection of partitions
$\delta=\delta_x\times\delta_y$ of $\Omega$ is quasi-uniform.

Let $\mathcal{M}(r,\delta_x)$ and $\mathcal{M}(r,\delta_y)$ be the spaces of piecewise polynomials of degree $\leq r$, $r\geq 3$, defined by
$$\mathcal{M}(r,\delta_x)=\left\{v|v\in C^1(\bar{I}),v|_{\overline{I}^x_k}\in
P_r, k=1, 2 ,..., N_x, v(0)=v(1)=0\right\},$$
and
$$\mathcal{M}(r,\delta_y)=\left\{v|v\in C^1(\bar{I}),v|_{\overline{I}^y_l}\in
P_r, l=1, 2 ,..., N_y, v(0)=v(1)=0\right\},$$
where $P_r$ denotes the set of polynomials of degree at most $r$. Then we set
\[
\mathcal{M}(\delta)=\mathcal{M}(r,\delta_x)\otimes \mathcal{M}(r,\delta_y),
\]
the set of all functions that are finite linear combinations of products $v^x(x)v^y(y)$, where $v^x\in\mathcal{M}(r,\delta_x)$ and $v^y\in\mathcal{M}(r,\delta_y)$, and $\mbox{dim}\,\mathcal{M}(\delta)=(r-1)^2N_xN_y.$

Let $\{\lambda_k\}_{k=1}^{r-1}$, with $0<\lambda_1<\lambda_2<\cdots<\lambda_{r-1}<1$, denote the nodes of the
$(r-1)$-point Gauss quadrature rule on the interval
$\overline{I}$ with corresponding weights
$\{\omega_k\}_{k=1}^{r-1}$, and let $\Lambda_x=\{\xi_{i,k}^x\}_{i,k=1}^{N_x,r-1}$ and  $\Lambda_y=\{\xi_{j,l}^y\}_{j,l=1}^{N_y,r-1}$ be the sets of Gauss points in the $x$- and $y$-directions, respectively, where
\[
\xi_{i,k}^x=x_{i-1}+\lambda_kh^x_i,\quad 1\leq
k\leq r-1,\quad 1\leq i\leq N_x,\]
and
\[
\xi_{j,l}^y=y_{j-1}+\lambda_lh_j^y, \quad 1\leq
l\leq r-1, \quad 1\leq j\leq N_y.
\]
Then
$\Lambda=\{\xi|\xi=(\xi^x, \xi^y), \xi^x\in\Lambda_x, \xi^y\in\Lambda_y\}$ is the set of
Gauss quadrature points in $\Omega$, which are the collocation points.

For $u$ and $v$ defined on $\Lambda$, we define the discrete inner product
$\left \langle\cdot,\cdot\right \rangle$ and norm $\|\cdot\|_D$ by
\[
\left\langle
u,v\right\rangle=\sum\limits_{i=1}^{N_x}\sum\limits_{j=1}^{N_y}
h_i^xh_j^y\sum\limits_{k=1}^{r-1}\sum\limits_{l=1}^{r-1}\omega_k
\omega_l(uv)(\xi_{i,k}^x,\xi_{j,l}^y),\qquad  \|v\|_D^2=\langle
v,v\rangle.
\]

For $\ell$ a nonnegative integer, we denote by
\begin{equation}
\|f\|_{H^{\ell}}=\left(\sum\limits_{0\leq\alpha_1+\alpha_2\leq\ell}\left\|
\frac{\partial^{\alpha_1+\alpha_2}f}{\partial x^{\alpha_1}\partial y^{\alpha_2}}\right\|^2\right)^{\frac{1}{2}}
\end{equation}
the norm on the Sobolev space $H^{\ell}(\Omega)$, where $\| \cdot \|$ denotes the usual $L^2$ norm, sometimes written as $\|\cdot\|_{H^0}$ for convenience.

If $X$ is a normed space with norm $\|\cdot\|_X$, then we denote by $C\left([0,T],X\right)$ the set of functions $f\in C(\overline{\Omega}_T)\equiv C^{0, 0,0}(\overline{\Omega}_T)$ such that $f(\cdot,t)\in X$ for $t\in[0,T]$, and
$$\|f\|_{C([0,T],X)}=\max \limits_{0\leq t\leq T} \|f(\cdot,t)\|_X<\infty.$$

Let $C^{p,q,s}(\overline{\Omega}_T)$ denote the set of functions $f$ such that
$\displaystyle\frac{\partial^{i+j+n}f}{\partial x^i\partial y^j\partial t^n}$ is  continuous on $\overline{\Omega}_T$ for $0\leq i\leq p$, $0\leq j\leq q$, and $0\leq n\leq s$. If $f\in C^{p,q,s}(\overline{\Omega}_T)$, then $\|f\|_{C^{p,q,s}}$ is defined by
$$\|f\|_{C^{p,q,s}}=\max \limits_{ 0\leq i\leq p, 0\leq j\leq q, 0\leq n\leq s}\max \limits_{(x,y,t)\in\overline{\Omega}_T}\left |\frac{\partial^{i+j+n}f}{\partial x^i\partial y^j\partial t^n}\right|.$$

Throughout the paper, we denote by $C$ a generic positive constant that is independent of $h$ and $\Delta t$, unless otherwise noted and
is not necessarily the same on each occurrence. Besides, we make repeated use of the Young's inequality
\begin{eqnarray}
\label{eqyoung}
de\leq\varepsilon d^2+\frac{1}{4\varepsilon}e^2,\quad d, e \in {\cal R}, \quad \varepsilon>0,
\end{eqnarray}
Next we present several lemmas required in the stability and convergence analyses.
\begin{lemma}\label{lem:2.2}
If $U,V\in\mathcal{M}(\delta)$, then the following hold:
\begin{eqnarray}\label{eq2.3}
\left \langle-\Delta U,V\right \rangle=\left \langle U,-\Delta V\right \rangle,
\end{eqnarray}
{\rm \cite[Eq. (3.4)]{FeFa}};

\begin{eqnarray}\label{eq2.4}
\left \langle-\Delta U,U\right \rangle \; \geq \; C\left\|\nabla U\right\|^2\; \geq \; 0,
\end{eqnarray}
{\rm \cite[Eq. (3.5)]{FeFa}};

\begin{eqnarray}\label{eq2.5}
\left|\left \langle\Delta U,V\right \rangle\right| \; \leq \; C\left\| \nabla U\right\|\left \|\nabla V\right\|, \quad -\left \langle\Delta U,V\right \rangle \; \leq \; C\left[\left\| \nabla U\right\|^2+\left \|\nabla V\right\|^2\right];
\end{eqnarray}
{\rm see the proof of Lemma 3.3 in \cite{FeFa}}.
\end{lemma}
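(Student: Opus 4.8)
The plan is to reduce all three assertions to one-dimensional facts about the composite $(r-1)$-point Gauss rule, using the tensor-product structure of $\mathcal{M}(\delta)$ and of $\langle\cdot,\cdot\rangle$, and then to exploit the precise form of the Gauss quadrature error. Writing $U=\sum_\mu u^x_\mu(x)\,u^y_\mu(y)$ with $u^x_\mu\in\mathcal{M}(r,\delta_x)$ and $u^y_\mu\in\mathcal{M}(r,\delta_y)$ (and similarly for $V$), one sees that for each $y$-Gauss point $\xi_{j,l}^y$ the slice $U(\cdot,\xi_{j,l}^y)$ lies in $\mathcal{M}(r,\delta_x)$ and $U_{xx}(\cdot,\xi_{j,l}^y)=\partial_x^2[U(\cdot,\xi_{j,l}^y)]$, so that
\[
\langle U_{xx},V\rangle=\sum_{j,l}h_j^y\omega_l\,\langle\partial_x^2 U(\cdot,\xi_{j,l}^y),V(\cdot,\xi_{j,l}^y)\rangle_x,
\]
where $\langle\cdot,\cdot\rangle_x$ denotes the one-dimensional composite Gauss inner product on $[0,1]$; grouping instead by $x$-Gauss points gives the analogous formula for $\langle U_{yy},V\rangle$. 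Thus it is enough to analyze the one-dimensional form $g,h\mapsto\langle g'',h\rangle_x$ on $\mathcal{M}(r,\delta_x)$ and add the two contributions.

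The one-dimensional analysis rests on two elementary observations. First, since the $(r-1)$-point Gauss rule is exact for polynomials of degree $\le 2r-3$, the error it makes on an element $I_k^x$ for a polynomial $p$ of degree $\le 2r-2$ equals a strictly positive constant times $(h_k^x)^{2r-1}$ times the coefficient of $x^{2r-2}$ in $p|_{I_k^x}$, the constant being positive because the Gauss error for the monomial of degree $2r-2$ is positive. Second, $C^1$-continuity together with the homogeneous boundary conditions gives $\int_0^1 g''h\,dx=-\int_0^1 g'h'\,dx$ and $\int_0^1(g''h-gh'')\,dx=0$. Then (\ref{eq2.3}) follows because on each $I_k^x$ the polynomials $g''h$ and $gh''$ share the same coefficient of $x^{2r-2}$, namely $r(r-1)$ times the product of the leading coefficients of $g|_{I_k^x}$ and $h|_{I_k^x}$; hence their elementwise quadrature errors coincide while their exact integrals agree, so $\langle g'',h\rangle_x=\langle g,h''\rangle_x$, and summing the $x$- and $y$-parts yields $\langle\Delta U,V\rangle=\langle U,\Delta V\rangle$. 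For (\ref{eq2.4}), taking $h=g$ makes the coefficient of $x^{2r-2}$ in $g''g$ on $I_k^x$ equal to $r(r-1)$ times a square, hence nonnegative, so the elementwise error is $\ge 0$ and $\langle g'',g\rangle_x\le\int_0^1 g''g\,dx=-\|g'\|^2$, i.e.\ $\langle -g'',g\rangle_x\ge\|g'\|^2\ge 0$. Inserting this slice by slice,
\[
\langle -U_{xx},U\rangle\ \ge\ \sum_{j,l}h_j^y\omega_l\,\|\partial_x U(\cdot,\xi_{j,l}^y)\|_{L^2(0,1)}^2\ =\ \int_0^1\sum_{j,l}h_j^y\omega_l\,(\partial_x U)^2(x,\xi_{j,l}^y)\,dx,
\]
and since $(\partial_x U)(x,\cdot)\in\mathcal{M}(r,\delta_y)$ for each fixed $x$, the equivalence of the discrete Gauss norm with the $L^2$ norm on $\mathcal{M}(r,\delta_y)$ bounds the inner sum below by $C\|(\partial_x U)(x,\cdot)\|_{L^2(0,1)}^2$; combined with the symmetric $y$-estimate this gives $\langle-\Delta U,U\rangle\ge C\|\nabla U\|^2\ge 0$.

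For (\ref{eq2.5}) I would start from $\langle g'',h\rangle_x=-\int_0^1 g'h'\,dx-E$, where the quadrature remainder is $E=\sum_k E_k$ with $|E_k|\le C(h_k^x)^{2r-1}|a_k b_k|$ and $a_k,b_k$ the leading coefficients of $g,h$ on $I_k^x$. The inverse-type estimate $|a_k|\le C(h_k^x)^{-(2r-1)/2}\|g'\|_{L^2(I_k^x)}$, valid by equivalence of norms on polynomials of degree $\le r-1$ rescaled to $I_k^x$, and its analogue for $b_k$, make the powers of $h_k^x$ balance, so that $|E_k|\le C\|g'\|_{L^2(I_k^x)}\|h'\|_{L^2(I_k^x)}$; a discrete Cauchy-Schwarz then gives $|E|\le C\|g'\|\,\|h'\|$, and with $|\int_0^1 g'h'\,dx|\le\|g'\|\,\|h'\|$ one obtains $|\langle g'',h\rangle_x|\le C\|g'\|\,\|h'\|$. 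Lifting to two dimensions by Cauchy-Schwarz over the $y$-Gauss points together with the $L^2$-norm equivalence on $\mathcal{M}(r,\delta_y)$ (now its upper half) yields $|\langle U_{xx},V\rangle|\le C\|\partial_x U\|\,\|\partial_x V\|$, and likewise for the $yy$-term, hence $|\langle\Delta U,V\rangle|\le C\|\nabla U\|\,\|\nabla V\|$; the second inequality in (\ref{eq2.5}) then follows from Young's inequality (\ref{eqyoung}).

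The step I expect to be the main obstacle is the equivalence, uniform over the quasi-uniform family of partitions, of the discrete Gauss norm $\|\cdot\|_D$ with $\|\cdot\|$ on $\mathcal{M}(\delta)$ and its one-dimensional analogues: it is exactly this that converts the slicewise first-derivative $L^2$ bounds into genuine $\|\nabla\cdot\|$ bounds in both (\ref{eq2.4}) and (\ref{eq2.5}). The equivalence is not a local fact --- on a single element the nonzero polynomial of degree $r-1$ having the $r-1$ Gauss points as roots vanishes at all of them --- so it genuinely exploits the global $C^1$ coupling in $\mathcal{M}(r,\delta_x)$, and obtaining it with a constant independent of $h$ is the technical heart of the argument. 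In the present paper this equivalence, together with the estimates above, is taken over from \cite{FeFa}.
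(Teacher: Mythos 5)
The paper does not prove this lemma at all --- it simply cites \cite{FeFa} --- and your argument is a correct reconstruction of the standard proof that lives in that reference (and ultimately in Douglas--Dupont): tensor-product reduction to one dimension, the exactness of the $(r-1)$-point Gauss rule up to degree $2r-3$ together with the sign and size of its error on the degree-$2r-2$ leading term, elementwise integration by parts using $C^1$-continuity and the boundary conditions, and the scaling bound $|a_k|\leq C(h_k^x)^{-(2r-1)/2}\|g'\|_{L^2(I_k^x)}$ on leading coefficients. The one ingredient you import rather than prove --- the uniform equivalence of the discrete Gauss norm with the $L^2$ norm on $\mathcal{M}(r,\delta_y)$, needed to turn the slicewise bounds into $\|\nabla\cdot\|$ bounds --- is exactly the global, non-local fact that \cite{FeFa} also relies on, and you identify it correctly; so your proposal matches the cited approach in substance and has no gap beyond that acknowledged dependency.
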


\begin{lemma}
\label{lem:2.1}
For $V\in\mathcal{M}(\delta)$,
\begin{eqnarray}\label{eq2.2}
\left \langle\frac{\partial^4V}{\partial x^2\partial y^2},V\right \rangle \; \geq \; \left\| \frac{\partial^2V}{\partial x\partial y}\right \|^2,
\end{eqnarray}
{\rm \cite[Lemma 3.4]{FeFa}};
\begin{eqnarray}\label{eq2.20}
\left \langle\frac{\partial^4V}{\partial x^2\partial y^2},-\Delta V\right \rangle \; \geq \; \left\| \frac{\partial^3V}{\partial x^2\partial y}\right \|^2+\left\| \frac{\partial^3V}{\partial x\partial y^2}\right \|^2,
\end{eqnarray}
{\rm \cite[Eq. (2.31)]{fairweather2010}}; and
\begin{eqnarray}
\label{eq2.7}
\|V\|_{H^2}\leq C \|\Delta V\|_D,
\end{eqnarray}
{\rm \cite[Eq. (3.20)]{Bialecki4}}.
\end{lemma}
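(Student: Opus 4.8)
\smallskip\noindent\textit{Proof plan.}\ The plan is to reduce all three estimates to one-dimensional facts about the $(r-1)$-point Gauss rule and then to assemble the two-dimensional statements using the tensor-product structure of $\mathcal{M}(\delta)$ and of $\langle\cdot,\cdot\rangle$. Write $\langle f,g\rangle_x=\sum_{i=1}^{N_x}h_i^x\sum_{k=1}^{r-1}\omega_k (fg)(\xi_{i,k}^x)$ for the one-dimensional discrete inner product in $x$, and $\langle\cdot,\cdot\rangle_y$ analogously, so that $\langle\cdot,\cdot\rangle$ is their iterate. The engine is the bilinear identity: if $\phi,\psi\in\mathcal{M}(r,\delta_x)$ and $\psi(0)=\psi(1)=0$, then
\[
\langle\phi'',\psi\rangle_x=-\int_0^1\phi'\psi'\,dx-\sum_{i=1}^{N_x}\beta_i^x\,a_i^x(\phi)\,a_i^x(\psi),
\]
where $a_i^x(v)$ denotes the coefficient of $x^r$ in $v|_{\overline{I}^x_i}$ and $\beta_i^x>0$. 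I would prove this by writing $\int_{\overline{I}^x_i}\phi''\psi\,dx$ as its $(r-1)$-point Gauss approximation plus the quadrature remainder; since $\phi''\psi$ has degree $\le 2r-2$ on $\overline{I}^x_i$ and the remainder of the $n$-point Gauss rule is a \emph{strictly positive} constant times the derivative of order $2n$ of the integrand, this remainder equals $\beta_i^x\,a_i^x(\phi)a_i^x(\psi)$ with an explicit $\beta_i^x>0$; summing over $i$ and integrating $\int_0^1\phi''\psi$ by parts (interior-node terms cancel because $\phi\in C^1$, endpoint terms vanish because $\psi$ does) gives the identity. Two consequences will be used repeatedly: taking $\phi=\psi$ gives $\langle\phi'',\phi\rangle_x\le-\|\phi'\|^2_{L^2(0,1)}\le 0$; and, since $\phi''\psi''$ has degree $\le 2r-4\le 2r-3$ on each subinterval, the rule integrates it exactly, so $\langle\phi'',\psi''\rangle_x=\int_0^1\phi''\psi''\,dx$. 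The same statements hold with $x$ replaced by $y$.

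For (\ref{eq2.2}), the key preliminary is that for $V\in\mathcal{M}(\delta)$ any pure $x$-derivative $\partial_x^a V$ is, for each fixed $x$, a function of $y$ lying in $\mathcal{M}(r,\delta_y)$ and vanishing at $y=0,1$ (because $V(\cdot,0)\equiv V(\cdot,1)\equiv 0$), and symmetrically in $y$; moreover $A_i:=a_i^x(V(\cdot,\,\cdot\,))\in\mathcal{M}(r,\delta_y)$ with $A_i(0)=A_i(1)=0$. I would then carry out the $x$-quadrature in $\langle\partial_x^2\partial_y^2 V,V\rangle$ first and apply the identity with $\phi=(\partial_y^2 V)(\cdot,\xi_{j,l}^y)$ and $\psi=V(\cdot,\xi_{j,l}^y)$. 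After the subsequent $y$-quadrature the remainder reassembles into $\sum_i\beta_i^x\bigl(-\langle A_i'',A_i\rangle_y\bigr)\ge 0$, while the leading term becomes $-\int_0^1\langle\partial_y^2 w_x,w_x\rangle_y\,dx$ with $w_x:=(\partial_x V)(x,\,\cdot\,)\in\mathcal{M}(r,\delta_y)$ vanishing at the endpoints; by the first consequence above this is $\ge\int_0^1\int_0^1(\partial_x\partial_y V)^2\,dy\,dx=\|\partial_x\partial_y V\|^2$, which is (\ref{eq2.2}). For (\ref{eq2.20}), I would split $-\Delta V=-\partial_x^2 V-\partial_y^2 V$ and treat the two terms symmetrically: for the first, the $x$-quadrature is now \emph{exact} by the second consequence above, and $\langle\partial_x^2\partial_y^2 V,\partial_x^2 V\rangle$ reduces to $\int_0^1\langle w'',w\rangle_y\,dx$ with $w:=(\partial_x^2 V)(x,\,\cdot\,)\in\mathcal{M}(r,\delta_y)$ vanishing at the endpoints, whence $-\langle\partial_x^2\partial_y^2 V,\partial_x^2 V\rangle\ge\|\partial_x^2\partial_y V\|^2$; the $\partial_y^2 V$ term gives $\|\partial_x\partial_y^2 V\|^2$, and adding yields (\ref{eq2.20}).

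For (\ref{eq2.7}) --- equivalently, the $H^2$ stability estimate for the OSC solution of a Poisson problem --- I would first expand $\|\Delta V\|_D^2=\|\partial_x^2 V\|_D^2+2\langle\partial_x^2 V,\partial_y^2 V\rangle+\|\partial_y^2 V\|_D^2$. By the computation used for (\ref{eq2.2}) (now with $\phi=\partial_y^2 V$, $\psi=V$ on each slice), $\langle\partial_x^2 V,\partial_y^2 V\rangle\ge\|\partial_x\partial_y V\|^2\ge 0$; hence $\|\partial_x^2 V\|_D^2\le\|\Delta V\|_D^2$, $\|\partial_y^2 V\|_D^2\le\|\Delta V\|_D^2$, and, by the Cauchy--Schwarz inequality for $\langle\cdot,\cdot\rangle$, also $\|\partial_x\partial_y V\|^2\le\|\Delta V\|_D^2$. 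Next I would pass from discrete to continuous norms: $\partial_x^2 V$ has degree $\le r-2$ in $x$ on each subinterval, so the $x$-quadrature of its square is exact and $\|\partial_x^2 V\|_D^2=\int_0^1\|(\partial_x^2 V)(x,\,\cdot\,)\|_{D_y}^2\,dx$, where $\|p\|_{D_y}^2:=\langle p,p\rangle_y$; the $h$-uniform equivalence of the discrete and continuous $L^2$ norms on $\mathcal{M}(r,\delta_y)$ --- a standard fact in OSC theory, equivalent to a uniform bound on the inverse of the one-dimensional collocation matrix --- then gives $\|\partial_x^2 V\|\le C\|\partial_x^2 V\|_D\le C\|\Delta V\|_D$, and likewise for $\partial_y^2 V$. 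Finally, since $V=0$ on $\partial\Omega$ one has $\|\Delta V\|_{L^2}^2=\|\partial_x^2 V\|^2+2\|\partial_x\partial_y V\|^2+\|\partial_y^2 V\|^2$ and $\|\nabla V\|^2=-\int_\Omega V\,\Delta V\le\|V\|\,\|\Delta V\|_{L^2}$, which together with the Poincar\'e inequality $\|V\|\le C\|\nabla V\|$ bound $\|V\|$ and $\|\nabla V\|$ by $C\|\Delta V\|_{L^2}\le C\|\Delta V\|_D$; collecting all six terms of $\|V\|_{H^2}^2$ gives $\|V\|_{H^2}\le C\|\Delta V\|_D$.

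The main obstacle is different for the two groups. For (\ref{eq2.2}) and (\ref{eq2.20}) it is the sign bookkeeping: one must use that for a polynomial of degree $\le 2r-2$ the derivative of order $2r-2$ is the (constant) leading coefficient and that the Gauss remainder constant is positive, so that every remainder collapses to $\pm\beta_i^x a_i^x(\phi)a_i^x(\psi)$ with $\beta_i^x>0$ and the leftover contributions are nonnegative; one must also check, at each tensor-product reduction, that the relevant line-restriction of $V$ (or of a derivative of $V$) still lies in the correct one-dimensional spline space and still satisfies the homogeneous boundary condition. For (\ref{eq2.7}) the nontrivial ingredient is the $h$-uniform discrete-to-continuous $L^2$ norm equivalence on $\mathcal{M}(r,\delta_y)$ (the ``hard'' direction $\|p\|_{L^2(0,1)}\le C\|p\|_{D_y}$), which rests on the nonsingularity of the OSC collocation map together with a scaling argument exploiting the quasi-uniformity of $\delta$.
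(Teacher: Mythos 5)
The paper does not actually prove this lemma: all three inequalities are imported verbatim from the literature, (\ref{eq2.2}) from Fernandes--Fairweather, (\ref{eq2.20}) from Pani--Fairweather--Fernandes, and (\ref{eq2.7}) from Bialecki. So any comparison is between your proof and the proofs in those cited sources rather than anything in this paper. For (\ref{eq2.2}) and (\ref{eq2.20}) your route is exactly the standard one used there: the Douglas--Dupont/Percell--Wheeler identity $\langle\phi'',\psi\rangle_x=-\int_0^1\phi'\psi'\,dx-\sum_i\beta_i^x a_i^x(\phi)a_i^x(\psi)$ with $\beta_i^x>0$ (coming from the strict positivity of the $(r-1)$-point Gauss remainder on degree $2r-2$), combined with the observations that line restrictions such as $(\partial_xV)(x,\cdot)$, $(\partial_x^2V)(x,\cdot)$ and the leading-coefficient functions $A_i$ stay in $\mathcal{M}(r,\delta_y)$ with homogeneous boundary values, and that the quadrature is exact on products of second derivatives. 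Your bookkeeping is correct, and the argument goes through. For (\ref{eq2.7}) your derivation is a legitimate, more elementary alternative to Bialecki's: expanding $\|\Delta V\|_D^2$, using $\langle\partial_x^2V,\partial_y^2V\rangle\ge\|\partial_x\partial_yV\|^2\ge0$ to isolate $\|\partial_x^2V\|_D$, $\|\partial_y^2V\|_D$ and $\|\partial_x\partial_yV\|$, converting to continuous norms via exactness in one variable plus the $h$-uniform equivalence $\|p\|_{L^2}\le C\|p\|_{D}$ on the one-dimensional space, and finishing the lower-order terms with $\int V_{xx}V_{yy}=\int V_{xy}^2$ and Poincar\'e. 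That norm equivalence is the one genuinely nontrivial imported ingredient, and you correctly identify it as such; it is indeed standard for quasi-uniform partitions. Two small slips, neither fatal: in the (\ref{eq2.7}) discussion you write ``$\phi=\partial_y^2V$, $\psi=V$'' where bounding $\langle\partial_x^2V,\partial_y^2V\rangle$ requires $\phi=V$, $\psi=\partial_y^2V$ (your choice bounds $\langle\partial_x^2\partial_y^2V,V\rangle$ instead, though both admit the same lower bound); and the appeal to Cauchy--Schwarz for $\|\partial_x\partial_yV\|^2\le\|\Delta V\|_D^2$ is unnecessary, since it already follows from the sign of the cross term in the expansion.
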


\section{The ADI Crank-Nicolson OSC scheme}
\label{Fulldis}
\setcounter{equation}{0}
Let $\{t_n\}_{n=0}^M$ be a uniform partition of $[0,T]$ such that $t_n=n\Delta t$,
$\Delta t=T/M$, where $M$ is a positive integer and $\Delta t$ is the time step size. We set $t_{n-1/2}=(n -1/2)\Delta t$, $1\leq n\leq M$. Next, we introduce the following notation:
\[V^n(\cdot,\cdot)=V(\cdot,\cdot,t_n),\ \ 0\leq n\leq M, \]
\[\delta_t V^{n}=\frac{V^n-V^{n-1}}{\Delta t},\qquad V^{n-\frac{1}{2}}=\frac{1}{2}(V^n+V^{n-1}),\ \ 1\leq n\leq M. \]
Then, the time fractional derivative $_0^CD_t^{\alpha}u(x,y,t)$ at $t_{n-\frac{1}{2}}$ can be written  
\bea
\label{eq:3-0}
_0^CD_t^{\alpha}u(x,y,t_{n-\frac{1}{2}})
&=&\frac{1}{\Gamma(2-\alpha)}\int_0^{t_{n-\frac{1}{2}}}\frac{\partial^2 u(x,y,s)}{\partial s^2}\frac{ds}{(t_{n-\frac{1}{2}}-s)^{\alpha-1}}\\
\nonumber\\
&=&{\cal J}^{n-\frac{1}{2}}_{\alpha}(u)+R_{\alpha}^{n-\frac{1}{2}},\quad 1\leq n\leq M,\nonumber
\eea
where
\beq
\label{eq:3-1}
{\cal J}^{n-\frac{1}{2}}_{\alpha}(u)=\frac{\Delta t^{1-\alpha}}{\Gamma (3-\alpha)}\left[b_0\delta_t u^{n}-\sum\limits_{j=1}^{n-1}(b_{n-j-1}-b_{n-j})\delta_t u^{j}-b_{n-1}\phi\right],
\eeq
with
$
b_j=(j+1)^{2-\alpha}-j^{2-\alpha},\; j\geq 0,
$
$
\phi(x,y) =D_t u(x,y,0)
$
from (\ref{eq:2}).  The quantity ${\cal J}^{n-\frac{1}{2}}_{\alpha}(u)$ is the L1-approximation of the Caputo derivative at $t_{n-\frac{1}{2}}$, with
truncation error, $R_{\alpha}^{n-\frac{1}{2}}$, satisfying
\begin{equation}
\label{eq:3-2}
\left |R_{\alpha}^{n-\frac{1}{2}}\right|\leq C{\Delta t}^{3-\alpha},\quad 1\leq n\leq M ;
\end{equation}
see \cite{GaSu,sunzhizhong2012}.
The coefficients $b_j$ possess the following properties
which are required in subsequent analyses.
\begin{lemma}\label{lem:1}{\rm \cite{sunzhizhong2012}}
The coefficients $b_j$, $j\ge 0$, satisfy:
\begin{eqnarray}\label{eq3-2}
&&(i) \ 1=b_0>b_1>\cdots>b_n>b_{n+1}>\cdots\rightarrow 0;\nonumber \\
&&(ii) \ (2-\alpha)(j+1)^{1-\alpha}< b_j<(2-\alpha)j^{1-\alpha},\quad j\geq 1;\nonumber \\
&&(iii) \ \sum\limits_{j=0}^n(b_j-b_{j+1})+b_{n+1}=1;\nonumber \\
&&(iv) \  \sum\limits_{j=1}^{n}b_{j-1}=n^{2-\alpha}.\nonumber
\end{eqnarray}
\end{lemma}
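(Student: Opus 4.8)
The plan is to derive all four properties directly from the closed form $b_j=(j+1)^{2-\alpha}-j^{2-\alpha}$, using only that the exponent $2-\alpha$ lies in $(0,1)$, so that $g(t):=t^{2-\alpha}$ is strictly increasing and strictly concave on $[0,\infty)$ with $g'(t)=(2-\alpha)t^{1-\alpha}$, and that $1-\alpha<0$, so $t\mapsto t^{1-\alpha}$ is strictly decreasing on $(0,\infty)$ and tends to $0$ at infinity.

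The identities (iii) and (iv) are immediate telescoping sums requiring no estimates: in (iii), $\sum_{j=0}^n(b_j-b_{j+1})=b_0-b_{n+1}$, so adding $b_{n+1}$ gives $b_0=1^{2-\alpha}-0^{2-\alpha}=1$; in (iv), writing $b_{j-1}=j^{2-\alpha}-(j-1)^{2-\alpha}$ makes $\sum_{j=1}^{n}b_{j-1}$ collapse to $n^{2-\alpha}-0^{2-\alpha}=n^{2-\alpha}$.

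For (ii), I would apply the mean value theorem to $g$ on $[j,j+1]$ with $j\ge 1$: there exists $\xi_j\in(j,j+1)$ with $b_j=g(j+1)-g(j)=(2-\alpha)\xi_j^{1-\alpha}$. Since $t\mapsto t^{1-\alpha}$ is strictly decreasing, $(2-\alpha)(j+1)^{1-\alpha}<(2-\alpha)\xi_j^{1-\alpha}<(2-\alpha)j^{1-\alpha}$, which is (ii). Property (i) then follows quickly: $b_0=1$ by direct evaluation, $b_j>0$ for every $j\ge 0$ because $g$ is increasing, and $b_j\to 0$ from the upper bound in (ii) since $(2-\alpha)j^{1-\alpha}\to 0$. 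For the strict decrease $b_j>b_{j+1}$ I would use the representation $b_j=\int_j^{j+1}(2-\alpha)t^{1-\alpha}\,dt$ and observe that translating the interval one unit to the right strictly lowers the integral of the strictly decreasing integrand; equivalently, $b_{j+1}-b_j=g(j+2)-2g(j+1)+g(j)<0$ by strict concavity of $g$.

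There is essentially no obstacle here; the only step needing a moment's care is the strict inequality in the monotonicity claim of (i), for which the integral (or concavity) argument above is cleaner than manipulating the powers directly. Everything else is a telescoping identity or a one-line mean value theorem estimate.
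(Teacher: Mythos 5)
Your proof is correct, but note that the paper itself offers no proof of this lemma: it is quoted verbatim from the cited reference \cite{sunzhizhong2012}, so there is no in-paper argument to compare against. Your derivation --- telescoping for (iii) and (iv), the mean value theorem applied to $t\mapsto t^{2-\alpha}$ for (ii), and monotonicity of $b_j$ via strict concavity (equivalently, the integral representation $b_j=\int_j^{j+1}(2-\alpha)t^{1-\alpha}\,dt$ with a decreasing integrand) --- is exactly the standard argument used in that reference, and every step is sound for $\alpha\in(1,2)$. The only point worth a remark is that your integral representation for $b_0$ is improper at $t=0$, but it converges since $1-\alpha>-1$, so the $j=0$ case of the strict decrease still goes through.
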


With the approximation of the Caputo derivative given by (\ref{eq:3-1}), the Crank-Nicolson OSC scheme for the approximation of (\ref{eq:1}) consists in find $U_h^n\in\mathcal{M}(\delta)$, $ n=1, 2, \cdots, M$, such that, for $1\leq n\leq M,$
\begin{equation}
\label{eq:3-3}
\qquad \frac{\Delta t^{1-\alpha}}{\Gamma (3-\alpha)}[b_0\delta_t U_h^{n}
-\sum\limits_{j=1}^{n-1}(b_{n-j-1}-b_{n-j})\delta_t U_h^{j}-b_{n-1}\phi]
=\Delta U_h^{n-\frac{1}{2}}+f^{{n-\frac{1}{2}}} \quad {\rm on}\ \Lambda,
\end{equation}
where $f^{n-\frac{1}{2}}=f(\cdot,\cdot,t_{n-\frac{1}{2}})$; $U_h^0$ is prescribed later.

With $E_h^n=U_h^n-U_h^{n-1}$, we write (\ref{eq:3-3}) in the form
\begin{eqnarray}
\label{eq:3-4}
\lefteqn{\frac{\Delta t^{-\alpha}}{\Gamma (3-\alpha)}[E_h^n-\sum\limits_{j=1}^{n-1}(b_{n-j-1}-b_{n-j})E_h^j-\Delta t b_{n-1}\phi]}
\\
&=&\frac{1}{2}\Delta E_h^{n}+\Delta U_h^{n-1}+f^{{n-\frac{1}{2}}} \quad {\rm on}\ \Lambda,\quad  1\leq n\leq M,
\nonumber
\end{eqnarray}
since $b_0 = 1$ from Lemma \ref{lem:1}(i).
Let
\begin{equation}
\label{eqmu}
\mu=\Gamma (3-\alpha) \Delta t^{\alpha}.
\end{equation}
On multiplying (\ref{eq:3-4}) by $\mu$ and rearranging terms, we obtain
\begin{eqnarray}
\label{eq:3-5}
E_h^n-\frac{\mu}{2}\Delta E^n_h
=\sum\limits_{j=1}^{n-1}(b_{n-j-1}-b_{n-j})E_h^j+\Delta t b_{n-1}\phi+\mu\Delta U_h^{n-1}+\mu f^{{n-\frac{1}{2}}}&&
\\
{\rm on}\ \Lambda,\quad  1\leq n\leq M.&&
\nonumber
\end{eqnarray}
On adding the term
\[
\frac{\mu^2}{4}\frac{\partial^4E^n_h}{\partial x^2\partial y^2}
\]
to the left-hand side of (\ref{eq:3-5}), we obtain:
\begin{eqnarray}
\label{eq:3-6}
\left[1-\frac{\mu}{2}\Delta +\frac{\mu^2}{4}\frac{\partial^4}{\partial x^2\partial y^2}\right ]E^n_h=F^n \quad {\rm on}\ \Lambda,\quad  1\leq n\leq M,
\end{eqnarray}
where
\[
F^n=\sum\limits_{j=1}^{n-1}(b_{n-j-1}-b_{n-j})E_h^j+\Delta t b_{n-1}\phi+\mu\Delta U_h^{n-1}+\mu f^{{n-\frac{1}{2}}}+\frac{\mu^2}{4}\frac{\partial^4E^n_h}{\partial x^2\partial y^2},
\]
the basis of the  ADI OSC Crank-Nicolson method for approximating (\ref{eq:1}).

To write (\ref{eq:3-6}) as an ADI method in matrix-vector form, let $\{\chi_i\}_{i=1}^{M_x}$ and $\{\psi_j\}_{j=1}^{M_y}$ be bases for the subspaces $\mathcal{M}(r,\delta_x)$ and $\mathcal{M}(r,\delta_y)$, respectively, where $M_x=(r-1)N_x$ and $M_y=(r-1)N_y$, and set
\[
U^n_h(x,y)=\sum\limits_{i=1}^{M_x}\sum\limits_{j=1}^{M_y}\gamma_{ij}^{(n)}\chi_i(x)\psi_j(y).
\]
We let
\[
{\bf \Upsilon}^{(n)}=
\left[\gamma^{(n)}_{11},\gamma^{(n)}_{12},\cdots,\gamma^{(n)}_{1M_y},
\gamma^{(n)}_{21},\gamma^{(n)}_{22},\cdots,\gamma^{(n)}_{2M_y},
\gamma^{(n)}_{31},\cdots,\gamma^{(n)}_{M_xM_y}\right]^T,
\]
\[
{\bf \mathbf{F}}^{(n)}=[F^n(\xi_1^x,\xi_1^y),F^n(\xi_1^x,\xi_2^y),\cdots,
F^n(\xi_1^x,\xi_{M_y}^y),F^n(\xi_2^x,\xi_1^y),\cdots,F^n(\xi_{M_x}^x,\xi_{M_y}^y)]^T
\]
and define the matrices
\[
\begin{array}{ll}
A_x=\left[-\chi''_j(\xi_i^x)\right]_{i,j=1}^{M_x},& A_y=\left[-\psi''_j(\xi_i^y)\right]_{i,j=1}^{M_y},\\\\
B_x=\left[\chi_j(\xi_i^x)\right]_{i,j=1}^{M_x},&
B_y=\left[\psi_j(\xi_i^y)\right]_{i,j=1}^{M_y}.
\end{array}
\]
Then the algebraic problem comprises determining
${{\mbox{\boldmath$\nu$}}}^{(n)}={\mbox{\boldmath$\Upsilon$}}^{(n)}-{\mbox{\boldmath$\Upsilon$}}^{(n-1)}$ from
\begin{eqnarray}\label{eq:3-11}
\left[\left(B_x+\frac{\mu}{2}A_x\right)\otimes I_{M_y}\right]{\widehat{{\mbox{\boldmath$\nu$}}}}^{(n)}
={\bf \mathbf{F}}^{(n)},
\end{eqnarray}
and
\begin{eqnarray}\label{eq:3-12}
\left[I_{M_x}\otimes \left(B_y+\frac{\mu}{2}A_y\right)\right]
{{\mbox{\boldmath$\nu$}}}^{(n)}={\widehat{{\mbox{\boldmath$\nu$}}}}^{(n)},
\end{eqnarray}
where $\otimes$ denotes the matrix tensor product, and ${\widehat{\mbox{\boldmath$\nu$}}}^{(n)}$ is an auxiliary vector, cf., \cite{fairweather2010}.
Thus, it follows on using properties of $\otimes$ that  ${\mbox{\boldmath$\nu$}}^{(n)}$ is determined by solving the two sets of independent one-dimensional problems, (\ref{eq:3-11}) and (\ref{eq:3-12}). With standard choices of bases for the spaces
$\mathcal{M}(r,\delta_x)$ and $\mathcal{M}(r,\delta_y)$, these linear systems
have an almost block diagonal
structure, and can be solved efficiently using algorithms described in
\cite{FaGl}, for example. Clearly, the computation of ${\mbox{\boldmath$\nu$}}^{(n)}$ is highly parallel.

\section{Stability analysis}
\setcounter{equation}{0}
In this section, we derive stability results in the $H^l$-norm, $l=0, 1, 2$.
\subsection{The $H^1$ stability analysis}
An $H^1$ stability result for (\ref{eq:3-6}) is proved in the following theorem.
\begin{theorem}
\label{th:stability}
The ADI OSC Crank-Nicolson method (\ref{eq:3-6}) is stable with respect to the $H^1$ norm. Specifically, for $U^n_h\in \mathcal{M}(\delta)$, $1\leq n\leq M$,
\begin{equation}
\label{eq:stability}
\left\|\nabla U_h^n\right\|^2\leq C \left\|\nabla U_h^0\right\|^2+\frac{t_n^{2-\alpha}}{\Gamma(3-\alpha)}\left\|\phi\right\|_D^2+\Delta t \sum\limits_{j=1}^{n}\Gamma(2-\alpha)t_n^{\alpha-1}\left\|f^{j-\frac{1}{2}}\right\|_D^2.
\end{equation}
\end{theorem}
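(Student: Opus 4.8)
The plan is to prove the bound by an energy argument in which the scheme is tested against the time increments $E_h^n=U_h^n-U_h^{n-1}$, exploiting two structural features: for $\alpha\in(1,2)$ the $L1$ operator in (\ref{eq:3-1}) acts on the increments as a discrete positive (diagonally dominant) convolution, while the Crank--Nicolson spatial term produces a telescoping discrete energy $G^n:=\langle-\Delta U_h^n,U_h^n\rangle$. First I would note that the ADI scheme (\ref{eq:3-6}) is equivalent to the collocation equations (\ref{eq:3-3}), since the added term $\frac{\mu^2}{4}\frac{\partial^4 E_h^n}{\partial x^2\partial y^2}$ appears on both sides and cancels; multiplying (\ref{eq:3-3}) by $\mu=\Gamma(3-\alpha)\Delta t^\alpha$ and using $b_0=1$, the iterates satisfy
\[
E_h^n=\sum_{j=1}^{n-1}(b_{n-j-1}-b_{n-j})E_h^j+\Delta t\,b_{n-1}\phi+\mu\,\Delta U_h^{n-1/2}+\mu f^{n-1/2}\ \ {\rm on}\ \Lambda .
\]
Taking the discrete inner product with $E_h^n$ and using the symmetry (\ref{eq2.3}) gives $\mu\langle\Delta U_h^{n-1/2},E_h^n\rangle=-\frac{\mu}{2}(G^n-G^{n-1})$, and $G^n\ge C\|\nabla U_h^n\|^2\ge0$ by (\ref{eq2.4}). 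Moving this term to the left and summing the resulting equality over the step index from $1$ to $n$ yields
\[
\Sigma+\frac{\mu}{2}(G^n-G^0)=\Delta t\sum_{k=1}^{n}b_{k-1}\langle\phi,E_h^k\rangle+\mu\sum_{k=1}^{n}\langle f^{k-1/2},E_h^k\rangle ,
\]
where $\Sigma:=\sum_{k=1}^{n}\bigl[\|E_h^k\|_D^2-\sum_{j=1}^{k-1}(b_{k-j-1}-b_{k-j})\langle E_h^j,E_h^k\rangle\bigr]$.

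The technical heart is a lower bound for $\Sigma$ that retains the diagonal part. Using $2|\langle E_h^j,E_h^k\rangle|\le\|E_h^j\|_D^2+\|E_h^k\|_D^2$, the identity $\sum_{j=1}^{k-1}(b_{k-j-1}-b_{k-j})=1-b_{k-1}$, and --- after interchanging the order of summation --- the telescoping $\sum_{k=j+1}^{n}(b_{k-j-1}-b_{k-j})=1-b_{n-j}$, one obtains $\Sigma\ge\frac{1}{2}\sum_{k=1}^{n}b_{k-1}\|E_h^k\|_D^2$ (indeed with the larger weights $\frac{1}{2}(b_{k-1}+b_{n-k})$). I would then bound the two sums on the right by Young's inequality (\ref{eqyoung}) with the parameter $\varepsilon=\frac{1}{4}b_{k-1}$ in each term, so that the $\|E_h^k\|_D^2$ contributions are exactly absorbed into $\frac{1}{2}\sum_k b_{k-1}\|E_h^k\|_D^2$. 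What remains is
\[
\frac{\mu}{2}G^n\le\frac{\mu}{2}G^0+\Delta t^2\|\phi\|_D^2\sum_{k=1}^{n}b_{k-1}+\frac{\mu^2}{2-\alpha}\sum_{k=1}^{n}k^{\alpha-1}\|f^{k-1/2}\|_D^2 .
\]

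Finally I would simplify using the coefficient properties in Lemma \ref{lem:1}: part (iv) gives $\sum_{k=1}^{n}b_{k-1}=n^{2-\alpha}$, so $\Delta t^2\sum_k b_{k-1}=\Delta t^\alpha t_n^{2-\alpha}$; part (ii) gives $1/b_{k-1}\le k^{\alpha-1}/(2-\alpha)$, and $\Delta t^\alpha k^{\alpha-1}=\Delta t\,t_k^{\alpha-1}\le\Delta t\,t_n^{\alpha-1}$. Dividing through by $\mu/2=\frac{1}{2}\Gamma(3-\alpha)\Delta t^\alpha$ and recalling $\Gamma(3-\alpha)=(2-\alpha)\Gamma(2-\alpha)$ turns the last two terms into fixed multiples of $\frac{t_n^{2-\alpha}}{\Gamma(3-\alpha)}\|\phi\|_D^2$ and $\Delta t\sum_{k=1}^{n}\Gamma(2-\alpha)t_n^{\alpha-1}\|f^{k-1/2}\|_D^2$, while $G^n\ge C\|\nabla U_h^n\|^2$ from (\ref{eq2.4}) and $G^0\le C\|\nabla U_h^0\|^2$ from (\ref{eq2.5}) take care of the first two; this gives (\ref{eq:stability}) (with a generic constant in front of all three terms if one tracks the constants from (\ref{eq2.4})--(\ref{eq2.5})). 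I expect the main obstacle to be precisely the lower bound on $\Sigma$: one must keep the diagonal weight $b_{k-1}$ --- not merely $\Sigma\ge0$ --- and check that, after the $\mu$-rescaling, it is exactly strong enough to absorb the $\phi$- and $f$-terms; the telescoping identities and the Young-inequality bookkeeping that follow are then routine.
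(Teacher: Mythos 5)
Your proposal is correct and is essentially the proof given in the paper: the paper also tests the scheme against the time increment $\delta_t U_h^n=E_h^n/\Delta t$, telescopes the Crank--Nicolson term into $\frac12\delta_t\langle-\Delta U_h^n,U_h^n\rangle$ via the symmetry (\ref{eq2.3}), uses $b_{n-j-1}-b_{n-j}>0$ and Lemma \ref{lem:1}(iii) to control the convolution, absorbs the $f$-term by Young's inequality with $\varepsilon$ proportional to a $b$-coefficient, and finishes with Lemma \ref{lem:1}(ii),(iv) and (\ref{eq2.4})--(\ref{eq2.5}), exactly as you do. The only organizational difference is that the paper carries the weighted sum $\frac{\Delta t^{2-\alpha}}{\Gamma(3-\alpha)}\sum_{j=1}^n b_{n-j}\|\delta_t U_h^j\|_D^2$ inside an augmented energy $G^n$ and recurses $G^n\le G^{n-1}+\cdots$ in (\ref{eq4.12})--(\ref{eq4.14}), whereas you sum first and prove the lower bound $\Sigma\ge\frac12\sum_k\left(b_{k-1}+b_{n-k}\right)\|E_h^k\|_D^2$ by interchanging the order of summation; these are the same diagonal-dominance fact in two guises, and your computation of the weights checks out.

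Two small points. First, the scheme actually analyzed in Section 4 is (\ref{eq4.2})--(\ref{eq4.3}), in which the perturbation $\frac{\mu^2}{4}\frac{\partial^4\delta_t U_h^n}{\partial x^2\partial y^2}$ sits on the left-hand side and does \emph{not} cancel (the cancellation you describe is an artifact of how $F^n$ is written after (\ref{eq:3-6})). Your argument is unaffected, because $\left\langle\frac{\partial^4 E_h^n}{\partial x^2\partial y^2},E_h^n\right\rangle\ge\left\|\frac{\partial^2E_h^n}{\partial x\partial y}\right\|^2\ge0$ by (\ref{eq2.2}), so the extra term can simply be dropped from the left-hand side --- which is precisely what the paper does in (\ref{eq4.6}); you should invoke this rather than claim cancellation. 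Second, your choice $\varepsilon=\frac14 b_{k-1}$ yields the stated bound only up to absolute constants on the $\phi$- and $f$-terms, which you acknowledge; the paper's bookkeeping (Young's inequality with $\varepsilon=\frac12$ on the $\phi$-term and $\varepsilon\propto b_{n-j}$ on the $f$-term) reproduces the displayed constants exactly, though its own final division by $C_1$ from (\ref{eq4.21}) makes that precision largely cosmetic.
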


\begin{proof}  First note that (\ref{eq:3-6}) can be written as
\begin{eqnarray}
&&\delta_t U_h^{n}-\frac{\mu}{\Delta t}\Delta U^{n-\frac{1}{2}}_h
+\frac{\mu^2}{4}\frac{\partial^4\delta_t U_h^{n}}{\partial x^2\partial y^2}
\label{eq4.2}
\\
&=&\sum\limits_{j=1}^{n-1}(b_{n-j-1}-b_{n-j})\delta_t U_h^{j}+ b_{n-1}\phi+\frac{\mu}{\Delta t} f^{{n-\frac{1}{2}}} \quad {\rm on}\ \Lambda,\quad  1\leq n\leq M,
\nonumber
\end{eqnarray}
or, on substituting (\ref{eqmu}) into (\ref{eq4.2}) and rearranging terms,
\begin{eqnarray}
\label{eq4.3}
\frac{\Delta t^{1-\alpha}}{\Gamma (3-\alpha)}\delta_t U_h^{n}-\Delta U^{n-\frac{1}{2}}_h
+\frac{\Gamma(3-\alpha)\Delta t^{1+\alpha}}{4}\frac{\partial^4\delta_t U_h^{n}}{\partial x^2\partial y^2}\qquad\qquad&&
\\
=\frac{\Delta t^{1-\alpha}}{\Gamma (3-\alpha)}\sum\limits_{j=1}^{n-1}(b_{n-j-1}-b_{n-j})\delta_t U_h^{j}+ \frac{\Delta t^{1-\alpha}}{\Gamma (3-\alpha)}b_{n-1}\phi+f^{{n-\frac{1}{2}}}&&
\nonumber \\
\quad {\rm on}\ \Lambda,\quad  1\leq n\leq M.&&
\nonumber
\end{eqnarray}
Taking the discrete inner product of (\ref{eq4.3}) with $\delta_t U_h^{n}$ yields
\begin{eqnarray}
\label{eq4.4}
&&\frac{\Delta t^{1-\alpha}}{\Gamma (3-\alpha)}\left\langle\delta_t U_h^{n},\delta_t U_h^{n}\right\rangle-\left\langle\Delta U^{n-\frac{1}{2}}_h,\delta_t U_h^{n}\right\rangle
\\ && \quad +\frac{\Gamma(3-\alpha)\Delta t^{1+\alpha}}{4}\left\langle\frac{\partial^4\delta_t U_h^{n}}{\partial x^2\partial y^2},\delta_t U_h^{n}\right\rangle\nonumber
\\&=&\frac{\Delta t^{1-\alpha}}{\Gamma (3-\alpha)}\sum\limits_{j=1}^{n-1}(b_{n-j-1}-b_{n-j})\left\langle\delta_t U_h^{j},\delta_t U_h^{n}\right\rangle\nonumber \\ && \quad + \frac{\Delta t^{1-\alpha}}{\Gamma (3-\alpha)}b_{n-1}\left\langle\phi,\delta_t U_h^{n}\right\rangle+\left\langle f^{{n-\frac{1}{2}}},\delta_t U_h^{n}\right\rangle, \quad 1\leq n\leq M.
\nonumber
\end{eqnarray}
The first term on the left-hand side of (\ref{eq4.4}) can be written as
\begin{eqnarray}\label{eq4.5}
\left\langle\delta_t U_h^{n},\delta_t U_h^{n}\right\rangle \;\;=\;\; \left\|\delta_t U_h^{n} \right\|^2_D.
\end{eqnarray}
A straightforward calculation shows that the second term on the left-hand
side of (\ref{eq4.4}) gives
\begin{eqnarray}\label{eq4.7}
-\left \langle \Delta U_h^{n-\frac{1}{2}},\delta_t U_h^{n}\right \rangle \; = \; \frac{1}{2} \delta_t\left \langle -\Delta U_h^{n}, U_h^{n}\right \rangle.
\end{eqnarray}
and, from Lemma \ref{lem:2.1}, we have, for the third term,
\begin{eqnarray}\label{eq4.6}
\left \langle\frac{\partial^4\left[\delta_t U_h^{n}\right]}{\partial x^2\partial y^2},\delta_t U_h^{n}\right \rangle \; \geq \; \left\| \frac{\partial^2\delta_t U_h^{n}}{\partial x\partial y}\right \|^2 \;\;
\geq \;\; 0.
\end{eqnarray}

On substituting (\ref{eq4.5}) and (\ref{eq4.7}) into (\ref{eq4.4}) and dropping the non-negative term, $\left\| \ds\frac{\partial^2\delta_t U_h^{n}}{\partial x\partial y}\right \|^2$,  we obtain
\begin{eqnarray}
\label{eq4.8}
&&\frac{\Delta t^{1-\alpha}}{\Gamma (3-\alpha)}\left\|\delta_t U_h^{n} \right\|^2_D+\frac{1}{2} \delta_t\left \langle -\Delta U_h^{n}, U_h^{n}\right \rangle
\\&\leq&\frac{\Delta t^{1-\alpha}}{\Gamma (3-\alpha)}\sum\limits_{j=1}^{n-1}(b_{n-j-1}-b_{n-j})\left\langle\delta_t U_h^{j},\delta_t U_h^{n}\right\rangle\nonumber \\ && \quad + \frac{\Delta t^{1-\alpha}}{\Gamma (3-\alpha)}b_{n-1}\left\langle\phi,\delta_t U_h^{n}\right\rangle+\left\langle f^{{n-\frac{1}{2}}},\delta_t U_h^{n}\right\rangle, \quad 1\leq n\leq M.
\nonumber
\end{eqnarray}
Multiplying (\ref{eq4.8}) by $2\Delta t$, and using the fact that, from Lemma \ref{lem:1}(i),  $b_{n-1}>0$ and $b_{n-j-1}-b_{n-j}>0$,  we obtain, for $ 1\leq n\leq M$,
\begin{eqnarray}
\label{eq4.9}
&&\frac{2\Delta t^{2-\alpha}}{\Gamma (3-\alpha)}\left\|\delta_t U_h^{n} \right\|^2_D
+\left \langle -\Delta U_h^{n}, U_h^{n}\right \rangle
\\
&\leq&\left \langle -\Delta U_h^{n-1}, U_h^{n-1}\right \rangle+\frac{2\Delta t^{2-\alpha}}{\Gamma (3-\alpha)}\sum\limits_{j=1}^{n-1}(b_{n-j-1}-b_{n-j})\left\langle\delta_t U_h^{j},\delta_t U_h^{n}\right\rangle
\nonumber\\ && \quad + \frac{2\Delta t^{2-\alpha}}{\Gamma (3-\alpha)}b_{n-1}\left\langle\phi,\delta_t U_h^{n}\right\rangle+2\Delta t\left\langle f^{{n-\frac{1}{2}}},\delta_t U_h^{n}\right\rangle.
\nonumber
\end{eqnarray}
On using the Cauchy-Schwarz inequality and the triangle inequality, (\ref{eq4.9}) can be rewritten as
\begin{eqnarray}
\label{eq4.10}
&&\frac{2\Delta t^{2-\alpha}}{\Gamma (3-\alpha)}\left\|\delta_t U_h^{n} \right\|^2_D
+\left \langle -\Delta U_h^{n}, U_h^{n}\right \rangle
\\
&\leq&\left \langle -\Delta U_h^{n-1}, U_h^{n-1}\right \rangle+\frac{\Delta t^{2-\alpha}}{\Gamma (3-\alpha)}\sum\limits_{j=1}^{n-1}(b_{n-j-1}-b_{n-j})\left[\left\|\delta_t U_h^{j}\right\|_D^{2}+\left\|\delta_t U_h^{n}\right\|_D^2\right]\nonumber
\\
&& \quad + \frac{\Delta t^{2-\alpha}}{\Gamma (3-\alpha)}b_{n-1}\left[\left\|\phi\right\|_D^{2}+\left\|\delta_t U_h^{n}\right\|_D^2\right]+2\Delta t\left\langle f^{{n-\frac{1}{2}}},\delta_t U_h^{n}\right\rangle
\nonumber
\\
&=&\left \langle -\Delta U_h^{n-1}, U_h^{n-1}\right \rangle+\frac{\Delta t^{2-\alpha}}{\Gamma (3-\alpha)}\sum\limits_{j=1}^{n-1}(b_{n-j-1}-b_{n-j})\left\|\delta_t U_h^{j}\right\|_D^{2}\nonumber \\ && \quad+\frac{\Delta t^{2-\alpha}}{\Gamma (3-\alpha)}\left[\sum\limits_{j=1}^{n-1}(b_{n-j-1}-b_{n-j})+b_{n-1}\right ]\left\|\delta_t U_h^{n}\right\|_D^2
\nonumber \\ && \quad + \frac{\Delta t^{2-\alpha}}{\Gamma (3-\alpha)}b_{n-1}\left\|\phi\right\|_D^{2}+2\Delta t\left\langle f^{{n-\frac{1}{2}}},\delta_t U_h^{n}\right\rangle
, \quad 1\leq n\leq M.
\nonumber
\end{eqnarray}
Note that, from Lemma \ref{lem:1}(i) and (iii),
\[
\sum\limits_{j=1}^{n-1}(b_{n-j-1}-b_{n-j})+b_{n-1}=b_0=1,
\]
 so that (\ref{eq4.10}) becomes
\begin{eqnarray}
\label{eq4.11}
&&\frac{2\Delta t^{2-\alpha}}{\Gamma (3-\alpha)}\left\|
\delta_t U_h^{n} \right\|^2_D+\left \langle -\Delta U_h^{n},
U_h^{n}\right \rangle
\\
&\leq&\left \langle -\Delta U_h^{n-1}, U_h^{n-1}\right \rangle+\frac{\Delta t^{2-\alpha}}{\Gamma (3-\alpha)}\sum\limits_{j=1}^{n-1}(b_{n-j-1}-b_{n-j})\left\|\delta_t U_h^{j}\right\|_D^{2}\nonumber
\\
&& \quad+\frac{\Delta t^{2-\alpha}}{\Gamma (3-\alpha)}\left\|\delta_t U_h^{n}\right\|_D^2
+ \frac{\Delta t^{2-\alpha}}{\Gamma (3-\alpha)}b_{n-1}\left\|\phi\right\|_D^{2}\nonumber \\ && \quad +2\Delta t\left\langle f^{{n-\frac{1}{2}}},\delta_t U_h^{n}\right\rangle
, \quad 1\leq n\leq M.
\nonumber
\end{eqnarray}
On reformulating (\ref{eq4.11}), we obtain
\begin{eqnarray}
\label{eq4.12}
&&\left \langle
-\Delta U_h^{n}, U_h^{n}\right \rangle+\displaystyle{\frac{\Delta t^{2-\alpha}}{\Gamma (3-\alpha)}}\sum\limits_{j=1}^{n}b_{n-j}
\left\|\delta_t U_h^{j} \right\|^2_D
\\
&\leq&\left \langle -\Delta U_h^{n-1}, U_h^{n-1}\right \rangle+\frac{\Delta t^{2-\alpha}}{\Gamma (3-\alpha)}\sum\limits_{j=1}^{n-1}b_{n-j-1}\left\|\delta_t U_h^{j}\right\|_D^{2}
 \nonumber \\ && \quad
+ \frac{\Delta t^{2-\alpha}}{\Gamma (3-\alpha)}b_{n-1}\left\|\phi\right\|_D^{2}+2\Delta t\left\langle f^{{n-\frac{1}{2}}},\delta_t U_h^{n}\right\rangle
, \quad 1\leq n\leq M.
\nonumber
\end{eqnarray}
For convenience, we define $G^n$ by:
\begin{equation}
\label{eq4.13}
\left \{
\begin{array}{l}
G^0=\left \langle-\Delta U_h^{0}, U_h^{0}\right \rangle,
\\
G^n=\left \langle-\Delta U_h^{n}, U_h^{n}\right \rangle+\displaystyle{\frac{\Delta t^{2-\alpha}}{\Gamma (3-\alpha)}}\sum\limits_{j=1}^{n}b_{n-j}
\left\|\delta_t U_h^{j} \right\|^2_D, \quad n \ge 1.\\

\end{array}
\right .
\end{equation}
Then (\ref{eq4.12}) is equivalent to
\begin{eqnarray}
\label{eq4.14}
G^n
&\leq& G^{n-1}
+ \frac{\Delta t^{2-\alpha}}{\Gamma (3-\alpha)}b_{n-1}\left\|\phi\right\|_D^{2}+2\Delta t\left\langle f^{{n-\frac{1}{2}}},\delta_t U_h^{n}\right\rangle
 \\
&\leq& G^{n-2}+ \frac{\Delta t^{2-\alpha}}{\Gamma (3-\alpha)}b_{n-2}\left\|\phi\right\|_D^{2}+2\Delta t\left\langle f^{{n-1-\frac{1}{2}}},\delta_t U_h^{n-1}\right\rangle
\nonumber \\
&& \quad + \frac{\Delta t^{2-\alpha}}{\Gamma (3-\alpha)}b_{n-1}\left\|\phi\right\|_D^{2}
+2\Delta t\left\langle f^{{n-\frac{1}{2}}},\delta_t U_h^{n}\right\rangle
\nonumber \\
&&\hspace{1.in}\ldots \ldots
\nonumber\\\nonumber \\
&\leq& G^{0}+ \frac{\Delta t^{2-\alpha}}{\Gamma (3-\alpha)}\left[\sum\limits_{j=1}^{n}b_{n-j}\right ]\left\|\phi\right\|_D^{2}+2\Delta t\sum\limits_{j=1}^{n}\left\langle f^{{j-\frac{1}{2}}},\delta_t U_h^{j}\right\rangle.
\nonumber
\\
&\leq& G^{0}+ \frac{ t_n^{2-\alpha}}{\Gamma (3-\alpha)}\left\|\phi\right\|_D^{2}+2\Delta t\sum\limits_{j=1}^{n}\left\langle f^{{j-\frac{1}{2}}},\delta_t U_h^{j}\right\rangle,
\nonumber
\end{eqnarray}
using Lemma \ref{lem:1}(iv) in the last step.
On using the Cauchy-Schwarz inequality and the Young's inequality (\ref{eqyoung}),
the last term on the right hand side of (\ref{eq4.14}) may be bounded as
\begin{eqnarray}
\label{eq4.16}
\lefteqn{2\Delta t\sum\limits_{j=1}^{n}\left\langle f^{{j-\frac{1}{2}}},\delta_t U_h^{j}\right\rangle}
\\
&\leq& \Delta t\sum\limits_{j=1}^{n}\left[\frac{\Gamma (3-\alpha)}{\Delta t^{1-\alpha}b_{n-j}}\left\| f^{j-\frac{1}{2}}\right\|^2_D+\frac{\Delta t^{1-\alpha}b_{n-j}}{\Gamma (3-\alpha)}\left\|\delta_t U_h^{j}\right\|_D^2\right].
\nonumber
\end{eqnarray}
On substituting (\ref{eq4.13}) and (\ref{eq4.16}) into (\ref{eq4.14}) and simplifying the resulting expression,
we obtain, for $1 \le n \le M$,
\begin{eqnarray}
\label{eq4.18}
\left \langle
-\Delta U_h^{n}, U_h^{n}\right \rangle
&\leq&\left \langle -\Delta U_h^{0}, U_h^{0}\right \rangle
+ \frac{t_n^{2-\alpha}}{\Gamma (3-\alpha)}\left\|\phi\right\|_D^{2}
\\
&&\quad +
\Delta t\sum\limits_{j=1}^{n}\frac{\Delta t^{\alpha-1}\Gamma (3-\alpha)}{b_{n-j}}\left\| f^{j-\frac{1}{2}}\right\|^2_D
, \quad 1\leq n\leq M.
\nonumber
\end{eqnarray}
Also, since, from Lemma \ref{lem:1}(ii),
\[
b_{n-j}\geq(2-\alpha)(n-j+1)^{1-\alpha}\geq(2-\alpha)n^{1-\alpha},
\]
the last term on the right hand side of (\ref{eq4.18}) can be bounded as
\begin{eqnarray}
\label{eq4.20}
\Delta t\sum\limits_{j=1}^{n}\frac{\Delta t^{\alpha-1}\Gamma (3-\alpha)}{b_{n-j}}\left\| f^{j-\frac{1}{2}}\right\|^2_D
&\leq& \Delta t\sum\limits_{j=1}^{n}\frac{\Delta t^{\alpha-1}\Gamma (3-\alpha)}{(2-\alpha)n^{1-\alpha}}\left\| f^{j-\frac{1}{2}}\right\|^2_D
\\
&\leq& \Delta t \sum\limits_{j=1}^{n}\Gamma (2-\alpha)t_n^{\alpha-1}\left\| f^{j-\frac{1}{2}}\right\|^2_D.
\nonumber
\end{eqnarray}
Using (\ref{eq2.4}) and (\ref{eq2.5}), it can be shown that there exist positive constants $C_1$ and $C_2$ such that
\begin{eqnarray}\label{eq4.21}
\left \langle-\Delta U_h^n,U_h^n\right \rangle \; \geq \; C_1\left\|\nabla U_h^n\right\|^2, \quad \left \langle-\Delta U_h^0,U_h^0\right \rangle \; \leq \; C_2\left\| \nabla U_h^0\right\|^2.
\end{eqnarray}
Substituting (\ref{eq4.20}) and (\ref{eq4.21}) into (\ref{eq4.18}), and rearranging, we obtain
\begin{equation}
\label{eq:4.22}
\left\|\nabla U_h^n\right\|^2\leq C \left\|\nabla U_h^0\right\|^2+\frac{t_n^{2-\alpha}}{\Gamma(3-\alpha)}\left\|\phi\right\|_D^2+\Delta t \sum\limits_{j=1}^{n}\Gamma(2-\alpha)t_n^{\alpha-1}\left\|f^{j-\frac{1}{2}}\right\|_D^2,
\end{equation}
which completes the proof. \qquad\end{proof}

\subsection{The $H^2$ stability analysis}
An $H^2$ stability estimate is derived in the following theorem.
\begin{theorem}
\label{th:stability2}
The ADI OSC Crank-Nicolson method (\ref{eq:3-6}) is stable with respect to the $H^2$ norm. More precisely, for $U^n_h\in \mathcal{M}(\delta)$, $1\leq n, q\leq M$, we have
\begin{eqnarray}
\label{eq:stability2}
\left\|U_h^n\right\|_{H^2}^2\leq C \left[\left\|\Delta U_h^0\right\|_D^2+t_q^{2-2\alpha}\left\|\phi\right\|_D^2+\left\| f^{\frac{1}{2}}\right\|_D^2+\left\|f^{n-\frac{1}{2}}\right\|_D^2+\Delta t \sum\limits_{j=2}^{n}\left\|\delta_t f^{j-\frac{1}{2}}\right\|_D^2\right].
\nonumber
\end{eqnarray}
\end{theorem}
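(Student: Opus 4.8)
The plan is to follow the structure of the $H^1$ argument of Theorem \ref{th:stability}, but with the test function $-\Delta\delta_t U_h^n$ in place of $\delta_t U_h^n$. Taking the discrete inner product of (\ref{eq4.3}) with $-\Delta\delta_t U_h^n$, the first term is $\frac{\Delta t^{1-\alpha}}{\Gamma(3-\alpha)}\langle -\Delta\delta_t U_h^n,\delta_t U_h^n\rangle$, which is nonnegative by (\ref{eq2.4}); abbreviate $a^n:=\langle -\Delta\delta_t U_h^n,\delta_t U_h^n\rangle\ge 0$. The second term, by a Crank--Nicolson identity analogous to (\ref{eq4.7}) and the symmetry of $\langle\Delta\cdot,\Delta\cdot\rangle$, equals $\langle -\Delta U_h^{n-1/2},-\Delta\delta_t U_h^n\rangle=\langle\Delta U_h^{n-1/2},\Delta\delta_t U_h^n\rangle=\frac12\delta_t\|\Delta U_h^n\|_D^2$. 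The third term, $\frac{\Gamma(3-\alpha)\Delta t^{1+\alpha}}{4}\langle\frac{\partial^4\delta_t U_h^n}{\partial x^2\partial y^2},-\Delta\delta_t U_h^n\rangle$, is nonnegative by (\ref{eq2.20}) and is discarded.

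By (\ref{eq2.3}) and (\ref{eq2.4}), $(V,W)\mapsto\langle -\Delta V,W\rangle$ is a symmetric positive semidefinite bilinear form on $\mathcal{M}(\delta)$, so Cauchy--Schwarz and Young give $\langle\delta_t U_h^j,-\Delta\delta_t U_h^n\rangle\le\frac12(a^j+a^n)$ — the exact analogue of the estimate on $\langle\delta_t U_h^j,\delta_t U_h^n\rangle$ used in the $H^1$ proof. Multiplying by $2\Delta t$ and then following (\ref{eq4.8})--(\ref{eq4.14}) — with $\|\delta_t U_h^j\|_D^2$ replaced by $a^j$, $\langle -\Delta U_h^n,U_h^n\rangle$ by $\|\Delta U_h^n\|_D^2$, Lemma \ref{lem:1}(i),(iii) as before, and now keeping the $\phi$-term in the form $b_{n-1}\langle\phi,-\Delta\delta_t U_h^n\rangle$ (it will be summed by parts below) rather than bounding it by Young — I would obtain $\mathcal{G}^n\le\mathcal{G}^{n-1}+(\Phi)_n+(\mathcal{F})_n$, where $\mathcal{G}^0=\|\Delta U_h^0\|_D^2$, $\mathcal{G}^n=\|\Delta U_h^n\|_D^2+\frac{\Delta t^{2-\alpha}}{\Gamma(3-\alpha)}\sum_{j=1}^n b_{n-j}a^j$, $(\Phi)_n=\frac{2\Delta t^{2-\alpha}}{\Gamma(3-\alpha)}b_{n-1}\langle\phi,-\Delta\delta_t U_h^n\rangle$ and $(\mathcal{F})_n=2\Delta t\langle f^{n-1/2},-\Delta\delta_t U_h^n\rangle$. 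Telescoping and using $\mathcal{G}^n\ge\|\Delta U_h^n\|_D^2$ then gives $\|\Delta U_h^n\|_D^2\le\|\Delta U_h^0\|_D^2+\sum_{m=1}^n(\Phi)_m+\sum_{m=1}^n(\mathcal{F})_m$.

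The new feature, relative to the $H^1$ case, is that neither $\phi$ nor $f^{m-1/2}$ need belong to $\mathcal{M}(\delta)$, so the semidefinite-form estimate is unavailable for $\langle\phi,-\Delta\delta_t U_h^m\rangle$ and $\langle f^{m-1/2},-\Delta\delta_t U_h^m\rangle$; instead I would write $-\Delta\delta_t U_h^m=\delta_t(-\Delta U_h^m)$ and sum by parts in time. For the forcing this gives $\sum_{m=1}^n(\mathcal{F})_m=2\langle f^{n-1/2},-\Delta U_h^n\rangle-2\langle f^{1/2},-\Delta U_h^0\rangle-2\Delta t\sum_{m=2}^n\langle\delta_t f^{m-1/2},-\Delta U_h^{m-1}\rangle$, and Cauchy--Schwarz together with Young's inequality (\ref{eqyoung}) bounds the first term by $\epsilon\|\Delta U_h^n\|_D^2$ (absorbed into $\mathcal{G}^n$) plus $C\|f^{n-1/2}\|_D^2$, the second by $C(\|f^{1/2}\|_D^2+\|\Delta U_h^0\|_D^2)$, and the third by $C\Delta t\sum_{m=2}^n\|\delta_t f^{m-1/2}\|_D^2+C\Delta t\sum_{m=1}^{n-1}\|\Delta U_h^m\|_D^2$. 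For the initial-velocity sum, Abel summation against the weights $b_{m-1}$ gives $\sum_{m=1}^n(\Phi)_m=\frac{2\Delta t^{2-\alpha}}{\Gamma(3-\alpha)}\big[b_{n-1}\langle\phi,-\Delta U_h^n\rangle-\langle\phi,-\Delta U_h^0\rangle+\sum_{m=1}^{n-1}(b_{m-1}-b_m)\langle\phi,-\Delta U_h^m\rangle\big]$; invoking Lemma \ref{lem:1}(i),(ii) — in particular the decay bound $\Delta t^{2-\alpha}b_{n-1}^2\le C\,t_q^{2-2\alpha}$ and $\sum_m(b_{m-1}-b_m)\le1$ — together with Young's inequality bounds this by $C\,t_q^{2-2\alpha}\|\phi\|_D^2+C\|\Delta U_h^0\|_D^2$ plus small multiples of $\|\Delta U_h^n\|_D^2$ and $\max_{1\le m\le n}\|\Delta U_h^m\|_D^2$.

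Assembling these estimates, for every level $n$ one finds that $\|\Delta U_h^n\|_D^2$ is controlled by the bracketed quantity of (\ref{eq:stability2}) plus $\epsilon'\max_{1\le m\le n}\|\Delta U_h^m\|_D^2$, $C\Delta t\sum_{m=1}^{n-1}\|\Delta U_h^m\|_D^2$, and a term $\epsilon\|\Delta U_h^n\|_D^2$ that moves to the left. Since this holds at every intermediate level, I would maximize the left side over $1\le n'\le n$, absorb the $\max$-term for $\epsilon,\epsilon'$ sufficiently small, eliminate the remaining sum by the discrete Gronwall inequality (producing $C=C(T)$), and finally apply (\ref{eq2.7}), $\|U_h^n\|_{H^2}\le C\|\Delta U_h^n\|_D$, to reach (\ref{eq:stability2}). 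The main obstacle is precisely the $\phi$-term: since $\phi$ lies outside $\mathcal{M}(\delta)$ it can only be handled by summation by parts, which produces the endpoint contribution $b_{n-1}\langle\phi,-\Delta U_h^n\rangle$ — forcing one to use the sharp decay of $b_{n-1}$ from Lemma \ref{lem:1}(ii) to recover the factor $t_q^{2-2\alpha}$ — and, along with the forcing sum, leaves $\|\Delta U_h^m\|_D^2$ contributions that must be handled in the right order (maximize over the time level, absorb, then invoke Gronwall).
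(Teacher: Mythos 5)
Your proof is correct and follows the same overall skeleton as the paper's: test (\ref{eq4.3}) with $-\Delta\delta_t U_h^{n}$, obtain $\frac{1}{2}\delta_t\|\Delta U_h^n\|_D^2$ from the Crank--Nicolson term, discard the nonnegative mixed-derivative term via (\ref{eq2.20}), telescope, treat the $\phi$- and $f$-sums separately, absorb the resulting $\varepsilon\max_m\|\Delta U_h^m\|_D^2$ by maximizing over the time level, apply discrete Gronwall, and finish with (\ref{eq2.7}). You diverge, however, in the two most delicate sub-steps, and in both cases your route is arguably cleaner. For the memory sum $\sum_j(b_{n-j-1}-b_{n-j})\langle\delta_t U_h^j,-\Delta\delta_t U_h^n\rangle$, you use the Cauchy--Schwarz inequality for the symmetric positive semidefinite form $\langle-\Delta\cdot,\cdot\rangle$ (justified by (\ref{eq2.3}) and (\ref{eq2.4})), which reproduces the $H^1$ telescoping verbatim with $a^j=\langle-\Delta\delta_t U_h^j,\delta_t U_h^j\rangle$ in place of $\|\delta_t U_h^j\|_D^2$; the paper instead bounds this term through (\ref{eq2.5}) in terms of $\|\nabla\delta_t U_h^j\|\,\|\nabla\delta_t U_h^n\|$, which introduces the mismatched constants $C_3$ and $C_4$ in (\ref{eq4.29}) and makes the subsequent telescoping less transparent --- your version avoids that issue entirely. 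For the $\phi$-term, you use Abel summation against the weights $b_{m-1}$ together with the decay $b_{n-1}\le(2-\alpha)(n-1)^{1-\alpha}$ from Lemma \ref{lem:1}(ii) and the bound $\sum_m(b_{m-1}-b_m)\le 1$; the paper instead proves the inequality (\ref{eq4.32}) by an inductive case analysis occupying all of Appendix A, arriving at a single term $b_q\langle\phi,\Delta U_h^m-\Delta U_h^0\rangle$ that is then handled by Young's inequality exactly as you handle your endpoint term. Both routes produce the factor $t_q^{2-2\alpha}\|\phi\|_D^2$ and the small multiples of $\|\Delta U_h^n\|_D^2$ and $\max_m\|\Delta U_h^m\|_D^2$ that must be absorbed; your summation by parts is shorter and makes explicit why the $b$-decay is the mechanism behind the $t_q^{2-2\alpha}$ weight. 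Your treatment of the forcing term coincides with the paper's (which cites the same summation-by-parts estimate from the literature as its equation (\ref{eq4.35})).
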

\begin{proof}  Taking the inner product of (\ref{eq4.3}) with $-\Delta\delta_t U_h^{n}$, we obtain
\begin{eqnarray}
\label{eq4.23}
&&\frac{\Delta t^{1-\alpha}}{\Gamma (3-\alpha)}\left\langle\delta_t U_h^{n},-\Delta\delta_t U_h^{n}\right\rangle-\left\langle\Delta U_h^{n-\frac{1}{2}},-\Delta\delta_t U_h^{n}\right\rangle
\\ && \quad +\frac{\Gamma(3-\alpha)\Delta t^{1+\alpha}}{4}\left\langle\frac{\partial^4\delta_t U_h^{n}}{\partial x^2\partial y^2},-\Delta\delta_t U_h^{n}\right\rangle\nonumber
\\&=&\frac{\Delta t^{1-\alpha}}{\Gamma (3-\alpha)}\sum\limits_{j=1}^{n-1}(b_{n-j-1}-b_{n-j})\left\langle\delta_t U_h^{j},-\Delta\delta_t U_h^{n}\right\rangle
\nonumber\\ &&
+ \frac{\Delta t^{1-\alpha}}{\Gamma (3-\alpha)}b_{n-1}\left\langle\phi,-\Delta\delta_t U_h^{n}\right\rangle+\left\langle f^{{n-\frac{1}{2}}},-\Delta\delta_t U_h^{n}\right\rangle, \quad 1\leq n\leq M.
\nonumber
\end{eqnarray}
On using (\ref{eq2.4}) with $U=\delta_t U_h^{n}$, we have, in the first term on the right hand side,
\begin{eqnarray}
\label{eq4.24}
\left\langle\delta_t U_h^{n},-\Delta\delta_t U_h^{n}\right\rangle \;\;\geq\;\; C\left\|\nabla\delta_t U_h^{n} \right\|^2.
\end{eqnarray}
The second term on the left-hand side of (\ref{eq4.23}) can be written as
\begin{eqnarray}
\label{eq4.25}
\left \langle \Delta U_h^{n-\frac{1}{2}},\Delta\delta_t U_h^{n}\right \rangle \; = \; \frac{1}{2} \delta_t\left \langle \Delta U_h^{n}, \Delta U_h^{n}\right \rangle\; = \;\frac{1}{2}\delta_t\left\|\Delta U_h^{n} \right\|^2_D,
\end{eqnarray}
>From (\ref{eq2.20}) with $V=\delta_t U_h^{n}$,
\begin{equation}\label{eq4.26}
\left \langle\frac{\partial^4\delta_t U_h^{n}}{\partial x^2\partial y^2},-\Delta\delta_t U_h^{n}\right \rangle
 \geq \left\| \frac{\partial^3\delta_t U_h^{n}}{\partial x^2\partial y}\right \|^2+ \left\| \frac{\partial^3\delta_t U_h^{n}}{\partial x\partial y^2}\right \|^2 \;\;
\geq \;\; 0.
\end{equation}
On substituting (\ref{eq4.24})--(\ref{eq4.26}) into (\ref{eq4.23}), dropping the non-negative terms
$\left\| \ds\frac{\partial^3\delta_t U_h^{n}}{\partial x^2\partial y}\right \|^2$ and $\left\| \ds\frac{\partial^3\delta_t U_h^{n}}{\partial x\partial y^2}\right \|^2$, we obtain
\begin{eqnarray}
\label{eq4.27}
&&\frac{C_1\Delta t^{1-\alpha}}{\Gamma (3-\alpha)}\left\|\nabla\delta_t U_h^{n} \right\|^2+
\frac{1}{2}\delta_t\left\|\Delta U_h^{n} \right\|^2_D
\\&\leq&\frac{\Delta t^{1-\alpha}}{\Gamma (3-\alpha)}\sum\limits_{j=1}^{n-1}(b_{n-j-1}-b_{n-j})\left\langle\delta_t U_h^{j},-\Delta\delta_t U_h^{n}\right\rangle
\nonumber \\ &&
+ \frac{\Delta t^{1-\alpha}}{\Gamma (3-\alpha)}b_{n-1}\left\langle\phi,-\Delta\delta_t U_h^{n}\right\rangle
+\left\langle f^{{n-\frac{1}{2}}},-\Delta\delta_t U_h^{n}\right\rangle, \quad 1\leq n\leq M.
\nonumber
\end{eqnarray}
Since $b_{n-j-1}-b_{n-j}>0$ from Lemma \ref{lem:1}$(i)$,
\bea
\label{eq:4.27a}
\lefteqn{\sum\limits_{j=1}^{n-1}(b_{n-j-1}-b_{n-j})\left\langle\delta_t U_h^{j},-\Delta\delta_t U_h^{n}\right\rangle }\\
&\leq& \sum\limits_{j=1}^{n-1}(b_{n-j-1}-b_{n-j})\left|\left\langle\delta_t U_h^{j},-\Delta\delta_t U_h^{n}\right\rangle\right|\nonumber \\
&\leq&
\sum\limits_{j=1}^{n-1}(b_{n-j-1}-b_{n-j})\left[\frac{1}{\varepsilon}\left\|\nabla\delta_t U_h^{j}\right\|^2+\varepsilon\left\|\nabla\delta_t U_h^{n}\right\|^2\right]\nonumber
\\
&\leq& \frac{1}{\varepsilon}\sum\limits_{j=1}^{n-1}(b_{n-j-1}-b_{n-j})\left\|\nabla\delta_t U_h^{j}\right\|^2 + \varepsilon\left\|\nabla\delta_t U_h^{n}\right\|^2,\nonumber
\eea
on using (\ref{eq2.5}), (\ref{eqyoung}) and the fact that $\sum\limits_{j=1}^{n-1}(b_{n-j-1}-b_{n-j})=1-b_{n-1}<1$ from Lemma \ref{lem:1}$(iii)$.
We multiply(\ref{eq4.27}) by $2\Delta t$, use (\ref{eq:4.27a}) and rearrange terms to obtain
\begin{eqnarray}
\label{eq4.29}
&&\left\|\Delta U_h^{n} \right\|^2_D+
\frac{C_3\Delta t^{2-\alpha}}{\Gamma (3-\alpha)}\sum\limits_{j=1}^{n}b_{n-j}\left\|\nabla\delta_t U_h^{j} \right\|^2
\\&\leq&\left\|\Delta U_h^{n-1} \right\|^2_D+\frac{C_4\Delta t^{2-\alpha}}{\Gamma (3-\alpha)}\sum\limits_{j=1}^{n-1}b_{n-j-1}\left\|\nabla\delta_t U_h^{j}\right\|^2
\nonumber \\ && \quad
+ \frac{2\Delta t^{2-\alpha}}{\Gamma (3-\alpha)}b_{n-1}\left\langle\phi,-\Delta\delta_t U_h^{n}\right\rangle
 +2\Delta t\left\langle f^{{n-\frac{1}{2}}},-\Delta\delta_t U_h^{n}\right\rangle, \quad 1\leq n\leq M.
\nonumber
\end{eqnarray}
Therefore, using arguments similar to those in (\ref{eq4.14}),  we have, for $1\leq n\leq M$,
\begin{eqnarray}
\label{eq4.30}
&&\left\|\Delta U_h^{n} \right\|^2_D+
\frac{C_3\Delta t^{2-\alpha}}{\Gamma (3-\alpha)}\sum\limits_{j=1}^{n}b_{n-j}\left\|\nabla\delta_t U_h^{j} \right\|^2
\\&\leq&
\left\|\Delta U_h^{0} \right\|^2_D
+ \frac{2\Delta t^{2-\alpha}}{\Gamma (3-\alpha)}\sum\limits_{j=1}^{n}b_{j-1}\left\langle\phi,-\Delta\delta_t U_h^{j}\right\rangle
 +2\Delta t\sum\limits_{j=1}^{n}\left\langle f^{{j-\frac{1}{2}}},-\Delta\delta_t U_h^{j}\right\rangle.
\nonumber
\end{eqnarray}
In the Appendix $A$, it is shown that there exist integers $m$ and $q$, with $1\leq m\leq n$, $0\leq q\leq n-1$, such that
\begin{eqnarray}
\label{eq4.32}
\Delta t\sum\limits_{j=1}^{n}b_{j-1}\left\langle\phi,-\Delta\delta_t U_h^{j}\right\rangle &=&\sum\limits_{j=1}^{n}b_{j-1}\left\langle\phi,\Delta U_h^{j}-\Delta U_h^{j-1}\right\rangle
\\
&\leq&
b_{q}\left\langle\phi,\Delta U_h^{m}-\Delta U_h^{0}\right\rangle, \nonumber
\end{eqnarray}
since
$b_0=1$, $b_{q}\leq(2-\alpha)q^{1-\alpha},q>0,$
and $t_1=\Delta t$. Using (\ref{eqyoung}) and (\ref{eq4.32}), the Cauchy-Schwarz and triangle inequality, for $
1\leq q\leq n-1, 1\leq m\leq n,$ we then obtain
\begin{eqnarray}
\label{eq4.33}
\lefteqn{\frac{2\Delta t^{2-\alpha}}{\Gamma (3-\alpha)}\sum\limits_{j=1}^{n}b_{j-1}\left\langle\phi,-\Delta\delta_t U_h^{j}\right\rangle
\leq
\frac{2}{\Gamma (3-\alpha)}\left\langle t_q^{1-\alpha}\phi,\Delta U_h^{m}-\Delta U_h^{0}\right\rangle}
\\
&\leq&
\frac{1}{\Gamma (3-\alpha)} \left[\frac{ t_q^{2-2\alpha}}{\varepsilon}\left\|\phi\right\|_D^2+\varepsilon\left\|\Delta U_h^{m} \right\|_D^2+\varepsilon\left\|\Delta U_h^{0} \right\|_D^2\right].
 \nonumber
\end{eqnarray}
The last term on the right-hand side of (\ref{eq4.30}) is bounded as in \cite[Eq. (2.35)]{fairweather2010} to obtain
\begin{eqnarray}
\label{eq4.35}
2\Delta t\left|\sum\limits_{j=1}^{n}\left\langle f^{j-\frac{1}{2}},-\Delta\delta_t U_h^{j}\right\rangle\right|
 &\leq&\frac{1}{\varepsilon}\left\|f^{{n-\frac{1}{2}}}\right\|_D^2+\varepsilon\left\|\Delta U_h^{n} \right\|_D^2+\frac{1}{\varepsilon}\left\|f^{{\frac{1}{2}}}\right\|_D^2+\varepsilon\left\|\Delta U_h^{0} \right\|_D^2\hspace{-.4in}
  \\
 &&+\Delta t\sum\limits_{j=2}^{n}\left\|\delta_tf^{{j-\frac{1}{2}}}\right\|_D^2+\Delta t\sum\limits_{j=0}^{n-1}\left\|\Delta
 U_h^j\right\|_D^2.
 \nonumber
\end{eqnarray}

On substituting (4.29) and (4.30) into (4.27), dropping the non-negative second term on the left-hand side of (4.27), and simplifying the resulting expression, we obtain,
 for
$1\leq m\leq n,$ $1\leq q\leq n-1,$ $1\leq n\leq M,$
\begin{eqnarray}
\label{eq4.36}
\left\|\Delta U_h^{n} \right\|^2_D
&\leq& C\left[\left\|\Delta U_h^{0} \right\|^2_D+ t_q^{2-2\alpha}\left\|\phi\right\|_D^2+\left\|f^{{\frac{1}{2}}}\right\|_D^2
+\left\|f^{{n-\frac{1}{2}}}\right\|_D^2\right.
\\ &&
\left .+\Delta t\sum\limits_{j=2}^{n}\left\|\delta_t f^{{j-\frac{1}{2}}}\right\|_D^2\right ]+ C\Delta t\sum\limits_{j=1}^{n-1} \left\|\Delta
 U_h^j\right\|_D^2+C\varepsilon\left\|\Delta U_h^m\right\|_D^2.
\nonumber
\end{eqnarray}
Suppose
\[
\max_{0\le \ell \le n}\left\|\Delta U_h^{\ell}\right\|_D = \left\|\Delta U_h^J\right\|_D.
\]
Then
\begin{eqnarray*}
\left\|\Delta U_h^{n} \right\|^2_D
&\leq& C\left[\left\|\Delta U_h^{0} \right\|^2_D+ t_q^{2-2\alpha}\left\|\phi\right\|_D^2+\left\|f^{{\frac{1}{2}}}\right\|_D^2
+\left\|f^{{n-\frac{1}{2}}}\right\|_D^2\right.
\\ &&
\left .+\Delta t\sum\limits_{j=2}^{n}\left\|\delta_t f^{{j-\frac{1}{2}}}\right\|_D^2\right ]+ C\Delta t\sum\limits_{j=1}^{n-1} \left\|\Delta
 U_h^j\right\|_D^2+C\varepsilon\left\|\Delta U_h^J\right\|_D^2.
\end{eqnarray*}
Thus, since $0\le J \le n$,
\begin{eqnarray*}
\left\|\Delta U_h^{J} \right\|^2_D
&
\leq &C\left[\left\|\Delta U_h^{0} \right\|^2_D+ t_q^{2-2\alpha}\left\|\phi\right\|_D^2+\left\|f^{{\frac{1}{2}}}\right\|_D^2
+\left\|f^{{n-\frac{1}{2}}}\right\|_D^2\right.
\\ &&
\left .+\Delta t\sum\limits_{j=2}^{n}\left\|\delta_t f^{{j-\frac{1}{2}}}\right\|_D^2\right ]+ C\Delta t\sum\limits_{j=1}^{n-1} \left\|\Delta
 U_h^j\right\|_D^2+C\varepsilon\left\|\Delta U_h^J\right\|_D^2,
\end{eqnarray*}
from which it follows that, for $\varepsilon$ sufficiently small,
\begin{eqnarray}
\label{eq4.37c}
\left\|\Delta U_h^{n} \right\|^2_D &\leq& \left\|\Delta U_h^J\right\|_D^2
\leq C\left[\left\|\Delta U_h^{0} \right\|^2_D+ t_q^{2-2\alpha}\left\|\phi\right\|_D^2+\left\|f^{{\frac{1}{2}}}\right\|_D^2
+\left\|f^{{n-\frac{1}{2}}}\right\|_D^2\right.
\\ &&
\left .+\Delta t\sum\limits_{j=2}^{n}\left\|\delta_t f^{{j-\frac{1}{2}}}\right\|_D^2\right ]+ C\Delta t\sum\limits_{j=1}^{n-1} \left\|\Delta
 U_h^j\right\|_D^2.
\nonumber
\end{eqnarray}
Thus, on applying the discrete Gronwall lemma,
\[
\left\|\Delta U_h^{n} \right\|^2_D
\leq C\left[\left\|\Delta U_h^{0} \right\|^2_D+ t_q^{2-2\alpha}\left\|\phi\right\|_D^2+\left\|f^{{\frac{1}{2}}}\right\|_D^2
+\left\|f^{{n-\frac{1}{2}}}\right\|_D^2+\Delta t\sum\limits_{j=2}^{n}\left\|\delta_t f^{{j-\frac{1}{2}}}\right\|_D^2\right ].
\]
\end{proof}
\section{Convergence analysis}
\setcounter{equation}{0}
In this section, we give an analysis of the convergence of the ADI OSC method. For this purpose, we introduce the elliptic projection
$W: ~[0,T]\rightarrow \mathcal{M}(\delta)$ defined by
\begin{equation}\label{eq:W}
\Delta\left(u-W\right)=0\ \ \mbox{on}\ \  \Lambda \times [0,T],
\end{equation}
where $u$ is the solution of (\ref{eq:1})--(\ref{eq:3}).
The following two lemmas  provide estimates for $u-W$ and its time derivatives;
see \cite[Eqs. (2.45), (2.46)]{fairweather2010}.
\begin{lemma}\label{lem5}
If $\partial^iu/\partial t^l \in H^{r+3-j},\; i=0, 1, 2,
\;j=0, 1, 2$, and $W$ is defined by (\ref{eq:W}), then
\begin{equation}\label{eq5.2}
\left \|\frac{{\partial}^l(u-W)}{\partial t^l}\right \|_{H^j}\leq
Ch^{r+1-j}\left \|\frac{{\partial}^lu}{\partial t^l}\right  \|_{H^{r+3-j}},\quad
j=0, 1, 2,\ \ l=0, 1, 2.
\end{equation}
\end{lemma}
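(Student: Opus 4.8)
The plan is to reduce (\ref{eq5.2}) to the standard error estimates for the OSC approximation of the Dirichlet problem for Poisson's equation, and then to transfer those bounds to the time derivatives by a commutation argument.

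\emph{Reduction to an elliptic projection estimate ($l=0$).} Fix $t\in[0,T]$. Since $r\ge 3$ and, by hypothesis, $u(\cdot,t)$ lies in at least $H^{r+1}(\Omega)\hookrightarrow C^2(\overline{\Omega})$, the function $\Delta u(\cdot,t)$ is continuous on $\overline{\Omega}$ and may be evaluated at the collocation points. By (\ref{eq:W}), $W(\cdot,t)\in\mathcal{M}(\delta)$ is the unique element with $\Delta W(\xi,t)=\Delta u(\xi,t)$ for every $\xi\in\Lambda$; equivalently, $W(\cdot,t)$ is exactly the OSC solution of $-\Delta v=-\Delta u(\cdot,t)$ in $\Omega$, $v=0$ on $\partial\Omega$, collocated at $\Lambda$. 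The well-posedness of this OSC problem together with the estimate
\[
\|(u-W)(\cdot,t)\|_{H^j}\le Ch^{r+1-j}\|u(\cdot,t)\|_{H^{r+3-j}},\qquad j=0,1,2,
\]
is precisely \cite[Eqs. (2.45), (2.46)]{fairweather2010} (the $H^0$ and $H^1$ bounds also being subsumed in the general OSC theory surveyed in \cite{BiFa}), so I would quote it directly; this is (\ref{eq5.2}) in the case $l=0$.

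\emph{Passing to time derivatives ($l=1,2$).} The space $\mathcal{M}(\delta)$, the collocation set $\Lambda$, and the operator $\Delta$ appearing in (\ref{eq:W}) are all independent of $t$, and $\partial_t$ commutes with $\Delta$ (which differentiates only in $x$ and $y$) and with point evaluation at $\Lambda$. Writing $W(x,y,t)=\sum_{i=1}^{M_x}\sum_{j=1}^{M_y}\gamma_{ij}(t)\chi_i(x)\psi_j(y)$, differentiation in $t$ keeps us in $\mathcal{M}(\delta)$ — the basis functions, and in particular their homogeneous boundary values, are unchanged — so $\partial^l W/\partial t^l=\sum_{i,j}\gamma_{ij}^{(l)}(t)\chi_i(x)\psi_j(y)\in\mathcal{M}(\delta)$. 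Differentiating (\ref{eq:W}) $l$ times in $t$ then gives $\Delta\bigl(\partial^l u/\partial t^l-\partial^l W/\partial t^l\bigr)=0$ on $\Lambda\times[0,T]$, i.e., $\partial^l W/\partial t^l$ is the OSC elliptic projection of $\partial^l u/\partial t^l$. Applying the $l=0$ estimate with $u$ replaced by $\partial^l u/\partial t^l$ (which is legitimate precisely under the stated smoothness hypotheses on the time derivatives of $u$) yields (\ref{eq5.2}) for $l=1,2$.

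\emph{Main obstacle.} The reduction and the commutation argument are routine; the substance is the elliptic-projection estimate of the first step. Its proof rests on the one-dimensional OSC interpolation bounds (of de Boor--Swartz type) promoted to the tensor-product space $\mathcal{M}(\delta)$, a Nitsche-type duality argument to recover the optimal $H^0$ and $H^1$ orders, and an $H^2$-stability estimate of the type (\ref{eq2.7}), $\|V\|_{H^2}\le C\|\Delta V\|_D$, to handle $j=2$. Since this is carried out in full in \cite{fairweather2010}, the economical route is to cite it; reproducing that analysis would be the only lengthy ingredient of a self-contained proof.
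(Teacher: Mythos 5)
Your proposal is correct and coincides with what the paper does: the paper offers no proof of Lemma \ref{lem5} beyond citing \cite[Eqs.\ (2.45), (2.46)]{fairweather2010}, and your argument — identifying $W$ as the OSC solution of the collocated Poisson problem, quoting the elliptic-projection estimates from that reference for $l=0$, and obtaining $l=1,2$ by differentiating (\ref{eq:W}) in $t$ and noting that $\partial_t^l W$ stays in $\mathcal{M}(\delta)$ — is exactly the standard derivation underlying that citation. No gap; the only substantive ingredient is the cited elliptic estimate, as you correctly identify.
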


\begin{lemma}\label{lem6}
If $\partial^i u/\partial t^i\in H^{r+3}$, for $t\in \left [0,T\right ]$, $i=0, 1, 2$, then
\begin{equation}\label{eq5.3}
\left \|\frac{\partial ^{l+i}(u-W)}{\partial x^{l_1}\partial y^{l_2}\partial t^i}\right\|_{D}\leq
Ch^{r+1-l}\left \|\frac{{\partial}^iu}{\partial t^i}\right \|_{H^{r+3}}, \quad 0\leq l=l_1+l_2\leq 4.
\end{equation}
\end{lemma}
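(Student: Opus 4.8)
The statement to prove is Lemma~\ref{lem6}, an estimate for the collocation-norm $\|\cdot\|_D$ of the mixed space–time derivatives of the elliptic projection error $u-W$. Since this is cited from \cite[Eqs. (2.45), (2.46)]{fairweather2010}, my plan is to reconstruct the standard argument for such superconvergence-type estimates for OSC elliptic projections.

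\medskip

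\textbf{Plan.} First I would reduce the problem to a one-variable-at-a-time tensor-product argument. Because $\mathcal{M}(\delta) = \mathcal{M}(r,\delta_x)\otimes\mathcal{M}(r,\delta_y)$ and the projection is defined by interpolating the Laplacian at the Gauss points $\Lambda = \Lambda_x\times\Lambda_y$, the operator $u\mapsto W$ factors (after commuting $\Delta$ past the tensor structure) into one-dimensional Gauss-point collocation projections $P_x$ and $P_y$ acting in the $x$- and $y$-directions respectively. I would write $u - W = (I - P_x\otimes P_y)u = (I-P_x)u + P_x(I-P_y)u$ (or the symmetric splitting), so that estimating mixed derivatives of $u-W$ in $\|\cdot\|_D$ reduces to (a) known one-dimensional estimates for $(I-P_x)$ and $(I-P_y)$ in the appropriate Sobolev norms, and (b) stability of $P_x$, $P_y$ in those same norms. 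Since $\partial^i u/\partial t^i \in H^{r+3}$, differentiating the defining relation \eqref{eq:W} in $t$ shows $\partial^i W/\partial t^i$ is the elliptic projection of $\partial^i u/\partial t^i$, so it suffices to treat $i=0$ and carry the time derivative along as a parameter.

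\medskip

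\textbf{Key steps in order.} (1) Differentiate \eqref{eq:W} $i$ times in $t$ to get $\Delta(\partial^i u/\partial t^i - \partial^i W/\partial t^i)=0$ on $\Lambda$, reducing to the case $i=0$ with $u$ replaced by $\partial^i u/\partial t^i$. (2) Invoke the one-dimensional $C^1$-piecewise-polynomial Gauss-point collocation approximation theory: for the appropriate one-dimensional projection $P$, one has $\|\partial^{l}(v - Pv)\|_{L^\infty}$ (and hence at Gauss points) bounded by $Ch^{r+1-l}\|v\|_{H^{r+3}}$ for $0\le l\le 4$, together with $W^{l,\infty}$- or discrete-norm stability of $P$. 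These are the standard ingredients behind the quoted equations. (3) Apply the tensor-product splitting $I - P_x\otimes P_y = (I-P_x) + P_x(I-P_y)$, distribute the mixed derivative $\partial^{l_1}_x\partial^{l_2}_y$ across the two terms, use the one-dimensional estimate on the "error" factor and stability on the "projection" factor, and sum over subintervals using quasi-uniformity of $\delta$ to pass from pointwise/elementwise bounds to the global $\|\cdot\|_D$ norm. (4) Relabel the resulting bound in terms of $\|\partial^i u/\partial t^i\|_{H^{r+3}}$ and $h^{r+1-l}$ with $l=l_1+l_2$, noting that $l\le 4$ is exactly the range in which the one-dimensional $H^{r+3}$-regularity ($r\ge 3$ so $r+3\ge 6 > 4$) still controls the required derivative.

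\medskip

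\textbf{Main obstacle.} The delicate point is the tensor-product bookkeeping for the mixed derivatives: when $\partial^{l_1}_x\partial^{l_2}_y$ hits $P_x(I-P_y)u$, the $x$-derivatives land on $P_x$, so one needs a genuine $W^{l_1,\infty}$-type (inverse) stability estimate for the one-dimensional collocation projection $P_x$, not merely $L^2$-stability; establishing or citing that stability on quasi-uniform partitions — and making sure the constant is independent of $h$ and $\Delta t$ — is the real content. The boundary conditions $v(0)=v(1)=0$ built into $\mathcal{M}(r,\delta_x)$ must be compatible with the relevant time derivative of $u$ (which holds here since $u$ and its $t$-derivatives vanish on $\partial\Omega$ by \eqref{eq:3}), so no extra boundary-layer terms appear. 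Once the one-dimensional stability and approximation results are in hand, the rest is the routine tensor-product summation, which I would not write out in detail but simply reference as in \cite{fairweather2010}.
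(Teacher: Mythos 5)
The paper does not actually prove this lemma; it is quoted verbatim from \cite[Eqs. (2.45), (2.46)]{fairweather2010}, so the only question is whether your reconstruction is sound. It is not, because its central reduction fails: the elliptic projection $W$ defined by (\ref{eq:W}) does not factor as a tensor product $P_x\otimes P_y$ of one-dimensional Gauss-point collocation projections. The Laplacian is a \emph{sum}, $\partial_x^2+\partial_y^2$, of one-dimensional operators, not a product, so the collocation equations $\Delta(u-W)=0$ on $\Lambda$ do not decouple. Concretely, if $P_x$ denotes the one-dimensional projection defined by $\partial_x^2(v-P_xv)=0$ at the $x$-Gauss points (and similarly $P_y$), then at a point of $\Lambda$ one finds $\partial_x^2(P_xP_yu-u)=(P_y-I)\partial_x^2u$ and $\partial_y^2(P_xP_yu-u)=(P_x-I)\partial_y^2u$, which are $O(h^{r+1})$ but not zero; hence $P_xP_yu\neq W$, and the identity $u-W=(I-P_x)u+P_x(I-P_y)u$ on which your whole argument rests is false.

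The function $P_xP_yu$ (or the superconvergent Hermite-type interpolant used in \cite{fairweather2010} and \cite{Bialecki4}) can still serve as a \emph{comparison function}, but then the essential step --- which your proposal omits entirely --- is to control $\chi:=P_xP_yu-W\in\mathcal{M}(\delta)$. That is done by noting that $\Delta\chi=\Delta(P_xP_yu-u)$ on $\Lambda$ is $O(h^{r+1})$ in $\|\cdot\|_D$ (by the one-dimensional superconvergence just described), invoking (\ref{eq2.7}) of Lemma \ref{lem:2.1} to obtain $\|\chi\|_{H^2}\le Ch^{r+1}\|u\|_{H^{r+3}}$, and then applying inverse inequalities on the quasi-uniform partition to reach the mixed derivatives of $\chi$ up to total order $4$; the corresponding derivatives of $u-P_xP_yu$ are handled by one-dimensional approximation theory and stability, as you indicate. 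Without this comparison-plus-inverse-estimate step there is no route from one-dimensional results to a bound on $\partial^4(u-W)/\partial x^2\partial y^2$ in $\|\cdot\|_D$. Your step (1), differentiating (\ref{eq:W}) in $t$ to reduce to the case $i=0$, is correct and is indeed how the time derivatives are treated in the cited reference.
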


Convergence results for the ADI OSC method are given in the following  theorem.
\begin{theorem}
\label{th:convergence}
Suppose $u$ is the solution of
(\ref{eq:1})--(\ref{eq:3}), and $U_h^n$, $n=1,2,\cdots,M$, satisfies (\ref{eq:3-6}) with $U^0_h=W^0$. If $u\in C^{2,0,3}\cap C^{0,2,3} \cap C^{2,3,1} \cap C^{3,2,1}\cap C^{0,0,4}$ and
$
\partial u/\partial t,$ $
\partial^2 u/\partial t^2,$ $\partial^3 u/\partial t^3\in C\left([0,T],H^{r+3}\right),
$
 then 
\begin{equation}\label{eq:convergence}
\left\|u(t_{n})-U_h^{n}\right\|_{H^j}\leq C\left (h^{r+1-j}+\Delta
t^{3-\alpha}\right ), \quad j =0, 1, 2.
\end{equation}
\end{theorem}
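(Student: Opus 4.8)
The plan is to use the standard split $u(t_n) - U_h^n = (u(t_n) - W^n) + (W^n - U_h^n) =: \rho^n + \theta^n$, where $W$ is the elliptic projection defined by (\ref{eq:W}). The first piece $\rho^n$ is controlled directly by Lemmas \ref{lem5} and \ref{lem6}, which give $\|\rho^n\|_{H^j} \le C h^{r+1-j}$ under the stated regularity. The work is therefore entirely in estimating $\theta^n \in \mathcal{M}(\delta)$. To do this I would first derive the equation satisfied by $\theta^n$: substitute $u$ into the continuous equation at $t_{n-1/2}$, use (\ref{eq:3-0})--(\ref{eq:3-2}) to replace the Caputo derivative by its $L1$-approximation ${\cal J}^{n-1/2}_\alpha(u)$ plus the $O(\Delta t^{3-\alpha})$ remainder $R_\alpha^{n-1/2}$, and also incur a Crank--Nicolson-type truncation error of the form $\Delta u(t_{n-1/2}) - \Delta u^{n-1/2}$ (an $O(\Delta t^2)$ term involving $\partial^2_t \Delta u$), plus the consistency error introduced by the perturbation term $\tfrac{\mu^2}{4}\partial^4_{x^2 y^2}$ added in passing from (\ref{eq:3-5}) to (\ref{eq:3-6}). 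Subtracting the scheme (\ref{eq:3-6}) written in the form (\ref{eq4.3}), and using $\Delta(u - W) = 0$ on $\Lambda$ so that $\Delta u = \Delta W$ at the collocation points, one obtains an equation for $\theta^n$ of exactly the same structure as (\ref{eq4.3}) but with a right-hand side consisting of a computable truncation functional $\Psi^{n-1/2}$ in place of $f^{n-1/2}$, where $\Psi^{n-1/2}$ collects: $R_\alpha^{n-1/2}$, the Crank--Nicolson error, the $L1$-quadrature remainder acting on $\rho$ (terms like ${\cal J}^{n-1/2}_\alpha(\rho)$, estimated via Lemma \ref{lem6}), and the ADI perturbation terms $\tfrac{\mu^2}{4}\partial^4_{x^2 y^2}\delta_t(\cdot)$ evaluated on $u$ or $W$.

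The heart of the argument is then to apply the $H^1$ stability mechanism of Theorem \ref{th:stability} (for $j=0,1$) and the $H^2$ mechanism of Theorem \ref{th:stability2} (for $j=2$) to this $\theta$-equation. Since $U^0_h = W^0$ we have $\theta^0 = 0$, so the initial-data terms $\|\nabla \theta^0\|$ (resp. $\|\Delta\theta^0\|_D$) vanish. The role played by $\phi$ in those theorems is now played by $D_t\theta(\cdot,0)$; because $D_t u(x,y,0) = \phi$ is used exactly in the scheme and the elliptic projection commutes suitably, this term contributes at most an $O(h^{r+1})$ (resp. $O(h^{r-1})$) amount, again by Lemmas \ref{lem5}--\ref{lem6}, and in the $j=2$ case must be combined with the $t_q^{2-2\alpha}$ weight from Theorem \ref{th:stability2}'s appendix argument. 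The $f$-terms get replaced by $\Psi^{n-1/2}$; taking discrete inner products with $\delta_t\theta^n$ (for $H^0,H^1$) or $-\Delta\delta_t\theta^n$ (for $H^2$) and running the identical telescoping/Young's-inequality/Lemma \ref{lem:1} bookkeeping that produced (\ref{eq:stability}) and (\ref{eq:stability2}) yields, after bounding $\Delta t\sum_j \|\Psi^{j-1/2}\|_D^2$ and $\Delta t\sum_j\|\delta_t\Psi^{j-1/2}\|_D^2$ by $C(h^{r+1-j} + \Delta t^{3-\alpha})^2$, the estimates $\|\nabla\theta^n\| \le C(h^{r+1} + \Delta t^{3-\alpha})$ and $\|\theta^n\|_{H^2} \le C(h^{r-1} + \Delta t^{3-\alpha})$; the $H^0$ bound follows by a Poincaré-type inequality on $\mathcal{M}(\delta)$ (or directly from the $H^1$ estimate). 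Combining with the $\rho^n$ bounds via the triangle inequality gives (\ref{eq:convergence}) for $j=0,1,2$.

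The main obstacle, as usual for the $L1$ scheme, is the temporal truncation analysis: one must verify that $|R_\alpha^{n-1/2}| \le C\Delta t^{3-\alpha}$ (cited from \cite{GaSu,sunzhizhong2012}) and, crucially, that the \emph{accumulated} time error behaves like $\Delta t^{3-\alpha}$ rather than degrading. In the stability-driven estimate the $f$-term enters as $\Delta t\sum_{j=1}^n \Gamma(2-\alpha)t_n^{\alpha-1}\|f^{j-1/2}\|_D^2$ (from (\ref{eq4.20})), so with $\Psi^{j-1/2} = O(\Delta t^{3-\alpha})$ uniformly this sum is $O(t_n^{\alpha-1}\cdot t_n\cdot \Delta t^{2(3-\alpha)}) = O(\Delta t^{2(3-\alpha)})$, giving the clean rate after taking square roots; one must check the $t_n$-powers do not conspire to lose a factor, and that the weighted-coefficient factor $1/b_{n-j}$ is handled exactly as in (\ref{eq4.20}). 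A secondary technical point is that the added ADI perturbation terms $\tfrac{\mu^2}{4}\partial^4_{x^2y^2}\delta_t u$ scale like $\Delta t^{2\alpha}\cdot\Delta t^{-1} = \Delta t^{2\alpha-1}$ per step times fourth derivatives; one must confirm $2\alpha - 1 \ge 3-\alpha$ (equivalently $\alpha \ge 4/3$ — which is \emph{not} always true for $\alpha\in(1,2)$), so in fact these terms should be re-examined more carefully by absorbing a $\delta_t$ and summing by parts so they contribute at the $\Delta t^{\alpha}$ level rather than $\Delta t^{2\alpha-1}$, matching the treatment of the analogous terms in \cite{fairweather2010}; this telescoping of the ADI-perturbation error is the part requiring the most care.
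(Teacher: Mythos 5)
Your overall route is the same as the paper's: split $u^n-U_h^n$ via the elliptic projection $W$, observe that the projection error is handled by Lemmas \ref{lem5}--\ref{lem6}, derive an equation for the $\mathcal{M}(\delta)$-part of exactly the form (\ref{eq4.3}) with $f^{n-\frac12}$ replaced by a truncation functional (the paper's ${\cal F}_u^{n-\frac12}$ in (\ref{eq5.9})) and $\phi$ replaced by $\delta_t\eta^1$, and then invoke Theorem \ref{th:stability} for $j=0,1$ and Theorem \ref{th:stability2} for $j=2$, the latter requiring in addition a bound on $\Delta t\sum_j\|\delta_t{\cal F}_u^{j-\frac12}\|_D^2$. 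All of that matches.

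There is, however, a genuine error in your treatment of the ADI perturbation term, and it matters because your proposed remedy would not rescue the stated rate. You assert that $\tfrac{\mu^2}{4}\partial^4_{x^2y^2}\delta_t u^n$ scales like $\Delta t^{2\alpha}\cdot\Delta t^{-1}=\Delta t^{2\alpha-1}$, conclude that one would need $\alpha\ge 4/3$, and propose a summation by parts to recover $\Delta t^{\alpha}$ --- which would still be worse than $\Delta t^{3-\alpha}$ whenever $\alpha<3/2$. The miscount is twofold: $\delta_t u^n=(u^n-u^{n-1})/\Delta t$ is a divided difference of a smooth function and is therefore $O(1)$ (bounded by $\|u_t\|_\infty$), not $O(\Delta t^{-1})$; and the consistency error must be measured in the normalization (\ref{eq4.3}), where the Laplacian has unit coefficient and the source term is $f^{n-\frac12}$ itself. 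In that normalization the added term carries the coefficient $\tfrac{\mu^2}{4}\cdot\tfrac{\Delta t}{\mu}=\tfrac{\Gamma(3-\alpha)\Delta t^{1+\alpha}}{4}$, so its contribution to ${\cal F}_u^{n-\frac12}$ is $O(\Delta t^{1+\alpha})$ times fourth spatial derivatives of $u_t$ (this is exactly (\ref{eq5.13})). Since $1+\alpha>3-\alpha$ for every $\alpha\in(1,2)$, the term is dominated by the $L1$ truncation error uniformly in $\alpha$ and no summation by parts is needed. A second point you underweight: for $j=2$ the delicate step is not merely writing down $\Delta t\sum_j\|\delta_t{\cal F}_u^{j-\frac12}\|_D^2$ but proving that $\|\delta_t R_\alpha^{n-\frac12}\|_D\le C\Delta t^{3-\alpha}$, i.e., that differencing the $L1$ remainder in time does not lose a power of $\Delta t$; this is the content of the paper's Appendix B and is not a consequence of (\ref{eq:3-2}) alone.
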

\begin{proof}
With $W$ defined in (\ref{eq:W}), we set
\begin{equation}\label{eq5.5}
\eta^{n}=u^{n}-W^{n}, \quad \zeta^{n}=U_h^n-W^{n},\qquad 0 \leq n \leq M,
\end{equation}
so that
\begin{equation}\label{eq5.6}
u^n-U_h^{n}=\eta^{n}-\zeta^{n}.
\end{equation}
Since estimates of $\eta^{n}$ are known from Lemmas \ref{lem5} and \ref{lem6}, it is sufficient to bound $\zeta^{n}$, then use the triangle inequality to bound $u^n-U_h^{n}$.

{F}rom (\ref{eq:1}) and (\ref{eq:3-0}), it follows that, for $ 1\leq n\leq M$,
\begin{eqnarray}
\label{eq5.7}
\lefteqn{\frac{\Delta t^{1-\alpha}}{\Gamma (3-\alpha)}[b_0\delta_t u^{n}-\sum\limits_{j=1}^{n-1}(b_{n-j-1}-b_{n-j})\delta_t u^{j}-b_{n-1}\phi]}
\\
&& \quad
+R_{\alpha}^{n-\frac{1}{2}}
+\frac{\Gamma(3-\alpha)\Delta t^{1+\alpha}}{4}\frac{\partial^4\delta_t u^{n}}{\partial x^2\partial y^2}
\nonumber
\\ &=&\Delta u^{n-\frac{1}{2}}+\frac{\Gamma(3-\alpha)\Delta t^{1+\alpha}}{4}\frac{\partial^4\delta_t u^{n}}{\partial x^2\partial y^2}
+f(t_{n-\frac{1}{2}}), \quad {\rm on}\ \Lambda.
\nonumber
\end{eqnarray}
Combining (\ref{eq5.7}), (\ref{eq4.3}), (\ref{eq:3-6}), (\ref{eq:W}) and (\ref{eq5.5}), we have
\begin{eqnarray}
\label{eq5.8}
\lefteqn{\frac{\Delta t^{1-\alpha}}{\Gamma (3-\alpha)}\delta_t \zeta^{n}-\Delta \zeta^{n-\frac{1}{2}}
+\frac{\Gamma(3-\alpha)\Delta t^{1+\alpha}}{4}\frac{\partial^4\delta_t \zeta^{n}}{\partial x^2\partial y^2}}
\\&=&\frac{\Delta t^{1-\alpha}}{\Gamma (3-\alpha)}\sum\limits_{j=1}^{n-1}(b_{n-j-1}-b_{n-j})\delta_t \zeta^{j}
+\frac{\Delta t^{1-\alpha}}{\Gamma (3-\alpha)}b_{n-1}\delta_t \eta^{1}+{\cal F}_u^{n-\frac{1}{2}} ,\nonumber
\end{eqnarray}
where
\begin{eqnarray}
\label{eq5.9}
{\cal F}_u^{n-\frac{1}{2}} &=&
R_{\alpha}^{n-\frac{1}{2}}
-\frac{\Gamma(3-\alpha)\Delta t^{1+\alpha}}{4}\frac{\partial^4\delta_t u^{n}}{\partial x^2\partial y^2}+\frac{\Gamma(3-\alpha)\Delta t^{1+\alpha}}{4}\frac{\partial^4\delta_t \eta^{n}}{\partial x^2\partial y^2}
\\
&& \quad
+\frac{\Delta t^{1-\alpha}}{\Gamma (3-\alpha)}\sum\limits_{j=1}^{n-1}b_{n-j-1}\left (\delta_t \eta^{j+1}-\delta_t \eta^{j}\right ).
\nonumber
\end{eqnarray}
We first prove (\ref{eq:convergence}) for $j=0,1$.
Applying the stability result (\ref{eq:stability}) of Theorem \ref{th:stability} to  (\ref{eq5.8}), we obtain
\begin{equation}
\label{eq5.10}
\left\|\nabla \zeta^n\right\|^2\leq C \left\|\nabla \zeta^0\right\|^2+\frac{t_n^{2-\alpha}\left\|\delta_t \eta^{1}\right\|_D^2}{\Gamma(3-\alpha)}+\Delta t \sum\limits_{j=1}^{n}\Gamma(2-\alpha)t_n^{\alpha-1}\left\|{\cal F}_u^{j-\frac{1}{2}}\right\|_D^2
\end{equation}
>From (\ref{eq5.9}), we have
\begin{eqnarray}
\label{eq5.11}
\left\|{\cal F}_u^{n-\frac{1}{2}}\right\|_D&\leq&
\left\|R_{\alpha}^{n-\frac{1}{2}}\right\|_D
+\frac{\Gamma(3-\alpha)\Delta t^{1+\alpha}}{4}\left\|\frac{\partial^4\delta_t \eta^{n}}{\partial x^2\partial y^2}-\frac{\partial^4\delta_t u^{n}}{\partial x^2\partial y^2}\right\|_D
\\
&&+\frac{\Delta t^{1-\alpha}}{\Gamma (3-\alpha)}\left\|\sum\limits_{j=1}^{n-1}b_{n-j-1}\left (\delta_t \eta^{j+1}-\delta_t \eta^{j}\right )\right\|_D.
\nonumber
\end{eqnarray}
>From (\ref{eq:3-2}),
\begin{equation}
\label{eq5.12}
\left \|R_{\alpha}^{n-\frac{1}{2}}\right\|_D\leq C{\Delta t}^{3-\alpha},\ \ 1\leq n\leq M .
\end{equation}
For the second term on the right-hand side of (\ref{eq5.11}),  we obtain
\begin{eqnarray}
\label{eq5.13}
\lefteqn{\frac{\Gamma(3-\alpha)\Delta t^{1+\alpha}}{4}\left\|\frac{\partial^4\delta_t \eta^{n}}{\partial x^2\partial y^2}-\frac{\partial^4\delta_t u^{n}}{\partial x^2\partial y^2}\right\|_D}
\\
&\leq&C\Delta t^{1+\alpha}\left[\left\|\frac{\partial u}{\partial t}\right\|_{C^{2,2,0}}+\left\|\frac{\partial u}{\partial t}\right\|_{C\left([0,T],H^{r+3}\right)}\right],
\nonumber
\end{eqnarray}
from \cite[Eqs. (2.58), (2.59)]{fairweather2010}.

The last term on the right-hand side in (\ref{eq5.11}) is bounded in the following way.
First,
\begin{equation}
\label{eq5.14}
\;\;\;\;\;\;\;\frac{\Delta t^{1-\alpha}}{\Gamma (3-\alpha)}\left\|\sum\limits_{j=1}^{n-1}b_{n-j-1}\left (\delta_t \eta^{j+1}-\delta_t \eta^{j}\right )\right\|_D
\leq
\frac{\Delta t^{2-\alpha}}{\Gamma (3-\alpha)}\sum\limits_{j=1}^{n-1}b_{n-j-1}\left\|\delta_t^2 \eta^{j+1}\right\|_D,	
\end{equation}
and
\begin{eqnarray}
\label{eq5.15}
\;\;\;\;\;\;
\left\|\delta^2_t \eta^{j+1}\right\|_D
&=&\frac{1}{\Delta t^2}\left\|\int_{t_{j-1}}^{t_j}(\tau-t_{j-1})\frac{\partial^2\eta}{\partial t^2}(\tau)d\tau-\int_{t_{j}}^{t_{j+1}}(\tau-t_{j+1})\frac{\partial^2\eta}{\partial t^2}(\tau)d\tau\right\|_D
\\
&\leq&\frac{1}{\Delta t}\left[\int_{t_{j-1}}^{t_j}\left\|\frac{\partial^2\eta}{\partial t^2}(\tau)\right\|_D d\tau+\int_{t_{j}}^{t_{j+1}}\left\|\frac{\partial^2\eta}{\partial t^2}(\tau)\right\|_Dd\tau\right]
\nonumber \\
&\leq&Ch^{r+1}\left\|\frac{\partial^2u}{\partial t^2}\right\|_{C([0,T],H^{r+3})},\quad j\geq 1,
\nonumber
\end{eqnarray}
using Lemma \ref{lem6}.
Hence, on substituting (\ref{eq5.15}) into (\ref{eq5.14}), and using $\sum\limits_{j=1}^{n-1}b_{n-j-1}=(n-1)^{2-\alpha}$ from Lemma \ref{lem:1}$(iv)$, we have
\begin{eqnarray}
\label{eq5.17}
\;\;\;\;\;\;\;\;\frac{\Delta t^{1-\alpha}}{\Gamma (3-\alpha)}\left\|\sum\limits_{j=1}^{n-1}b_{n-j-1}\left (\delta_t \eta^{j+1}-\delta_t \eta^{j}\right )\right\|_D
&\leq& C_1h^{r+1}
\frac{t_{n-1}^{2-\alpha}}{\Gamma (3-\alpha)}
\left\|\frac{\partial^2u}{\partial t^2}\right\|_{C([0,T],H^{r+3})}
\\\nonumber\\
&\le& CT^{2-\alpha}h^{r+1}.
\nonumber
\end{eqnarray}
Since $1<\alpha<2$, we have  $1+\alpha>3-\alpha$.
Therefore, with (\ref{eq5.12}), (\ref{eq5.13}) and (\ref{eq5.17}) in (\ref{eq5.11}),
 we obtain
\begin{eqnarray}\label{eq5.18}
\left\|{\cal F}_u^{n-\frac{1}{2}}\right\|_D\leq C\left({\Delta t}^{3-\alpha}+h^{r+1}\right).
\end{eqnarray}
Also,
\begin{eqnarray}
\label{eq5.16}
\left\|\delta_t \eta^{1}\right\|_D=\frac{1}{\Delta t}\left\|\int_{t_{0}}^{t_1}\frac{\partial\eta}{\partial t}(\tau)d\tau\right\|_D
&\leq&
\frac{1}{\Delta t}\int_{t_{0}}^{t_1}\left\|\frac{\partial\eta}{\partial t}(\tau)\right\|_D d\tau
\\\nonumber\\
&\leq& Ch^{r+1}\left\|\frac{\partial u}{\partial t}\right\|_{C\left([0,T],H^{r+3}\right)},\nonumber
\end{eqnarray}
using (\ref{eq5.3}).
Then, substituting (\ref{eq5.18}) and (\ref{eq5.16}) into (\ref{eq5.10}), and noting that $\zeta^0=0$, we have
\begin{eqnarray}
\label{eq5.19}
\left\|\nabla \zeta^n\right\|\leq C\left({\Delta t}^{3-\alpha}+h^{r+1}\right).
\end{eqnarray}
Using Poincar\'{e}'s inequality, it follows that
\begin{eqnarray}
\label{eq5.20}
\left\|\zeta^n\right\|\leq C\left({\Delta t}^{3-\alpha}+h^{r+1}\right).
\end{eqnarray}
Then using the triangle inequality, (\ref{eq5.19}), (\ref{eq5.20}) and Lemma \ref{lem5}, it follows that
\[
\left\|u(t_{n})-U_h^{n}\right\|_{H^{j}}\leq C\left (h^{r+1-j}+\Delta t^{3-\alpha}\right ), \qquad j=0, 1,
\]
as desired.

When $j=2$, according to Theorem \ref{th:stability2}, we obtain
\begin{eqnarray}
\label{eq5.21}
\left\|\zeta^n\right\|_{H^2}^2&\leq& C \left(\left\|\Delta \zeta^0\right\|_D^2+t_q^{2-2\alpha}\left\|\delta_t \eta^{1}\right\|_D^2
\right)
\\
& &
+C \left(\left\| {\cal F}_u^{\frac{1}{2}}\right\|_D^2+\left\|{\cal F}_u^{n-\frac{1}{2}}\right\|_D^2+\Delta t \sum\limits_{j=2}^{n}\left\|\delta_t {\cal F}_u^{j-\frac{1}{2}}\right\|_D^2\right).
\nonumber
\end{eqnarray}
In order to complete the proof,
we require an estimate of $\left\|\delta_t {\cal F}_u^{n-\frac{1}{2}}\right\|_D^2$.
First observe that, from (\ref{eq5.9}), for $2\leq n\leq M$,
\begin{eqnarray}
\label{eq5.22}
\delta_t {\cal F}_u^{n-\frac{1}{2}}&=&
\delta_t R_{\alpha}^{n-\frac{1}{2}}
-\frac{\Gamma(3-\alpha)\Delta t^{1+\alpha}}{4}\frac{\partial^4\delta^2_t u^{n}}{\partial x^2\partial y^2}+\frac{\Gamma(3-\alpha)\Delta t^{1+\alpha}}{4}\frac{\partial^4\delta^2_t \eta^{n}}{\partial x^2\partial y^2}
\\
&&
+\frac{\Delta t^{1-\alpha}}{\Gamma (3-\alpha)}\left[\sum\limits_{j=1}^{n-2}b_{j-1}\left(\delta^2_t \eta^{n-j+1}-\delta^2_t\eta^{n-j}\right)+b_{n-2}\delta^2_t \eta^{2}\right],
\nonumber
\end{eqnarray}
from which it follows that
\begin{eqnarray}
\label{eq5.23}
\;\;\;\;\left\|\delta_t{\cal F}_u^{n-\frac{1}{2}}\right\|_D&\leq&
\left\|\delta_tR_{\alpha}^{n-\frac{1}{2}}\right\|_D
+\frac{\Gamma(3-\alpha)\Delta t^{1+\alpha}}{4}\left\|\frac{\partial^4\delta^2_t \eta^{n}}{\partial x^2\partial y^2}-\frac{\partial^4\delta^2_t u^{n}}{\partial x^2\partial y^2}\right\|_D
\\\nonumber\\
&&
+\frac{\Delta t^{2-\alpha}}{\Gamma (3-\alpha)}\left\|\sum\limits_{j=1}^{n-2}b_{j-1}\delta^3_t \eta^{n-j+1}\right\|_D+\frac{\Delta t^{1-\alpha}}{\Gamma (3-\alpha)}b_{n-2}\left\|\delta^2_t \eta^{2}\right\|_D.
\nonumber
\end{eqnarray}
In Appendix $B$, it is proved that
\begin{equation}
\label{eq5.24}
\left \|\delta_t R_{\alpha}^{n-\frac{1}{2}}\right\|_D\leq C{\Delta t}^{3-\alpha},\ \ 1\leq n\leq M.
\end{equation}

Then
\begin{eqnarray}
\label{eq5.25}
\lefteqn{\frac{\Gamma(3-\alpha)\Delta t^{1+\alpha}}{4}\left\|\frac{\partial^4\delta^2_t \eta^{n}}{\partial x^2\partial y^2}-\frac{\partial^4\delta^2_t u^{n}}{\partial x^2\partial y^2}\right\|_D}
\\
&\leq&C\Delta t^{1+\alpha}\left[\left\|\frac{\partial^2 u}{\partial t^2}\right\|_{C^{2,2,0}}+\left\|\frac{\partial^2 u}{\partial t^2}\right\|_{C\left([0,T],H^{r+3}\right)}\right];
\nonumber
\end{eqnarray}
cf., (\ref{eq5.13}).
Also, for $j\geq 3$,
\begin{eqnarray}
\label{eq5.27}
\left\|\delta^3_t \eta^{j}\right\|_D
&=&\frac{1}{\Delta t^3}\left\|\int_{t_{j-1}}^{t_j}\frac{1}{2}(s-t_{j})^2\frac{\partial^3\eta}{\partial t^3}(\cdot,s)ds-\int_{t_{j-2}}^{t_{j-1}}\frac{1}{2}(s-t_{j-2})^2\frac{\partial^3\eta}{\partial t^3}(\cdot,s)ds \right .\nonumber
\\
&&
\left .+\int_{t_{j-3}}^{t_{j-2}}\frac{1}{2}(s-t_{j-3})^2\frac{\partial^3\eta}{\partial t^3}(\cdot,s)ds-\int_{t_{j-2}}^{t_{j-1}}\frac{1}{2}(s-t_{j-1})^2\frac{\partial^3\eta}{\partial t^3}(\cdot,s)ds\right\|_D
\\
&\leq&\frac{1}{\Delta t}\left[\frac{1}{2}\int_{t_{j-1}}^{t_j}\left\|\frac{\partial^3\eta}{\partial t^3}\right\|_D ds+\int_{t_{j-2}}^{t_{j-1}}\left\|\frac{\partial^3\eta}{\partial t^3}\right\|_Dds
+\frac{1}{2}\int_{t_{j-3}}^{t_{j-2}}\left\|\frac{\partial^3\eta}{\partial t^3}\right\|_Dds\right]
\nonumber\\
&\leq&
 Ch^{r+1}\left\|\frac{\partial^3u}{\partial t^3}\right\|_{C([0,T],H^{r+3})},
\nonumber
\end{eqnarray}
using Lemma \ref{lem6}.
Thus,
since $ \sum\limits_{j=1}^{n-2}b_{j-1}=(n-2)^{2-\alpha}$ from Lemma \ref{lem:1}$(iv)$,  we obtain, on using (\ref{eq5.27}),
\begin{eqnarray}
\label{eq5.26}
\frac{\Delta t^{2-\alpha}}{\Gamma (3-\alpha)}\left\|\sum\limits_{j=1}^{n-2}b_{j-1}\delta^3_t \eta^{n-j+1}\right\|_D
&\leq&
\frac{\Delta t^{2-\alpha}}{\Gamma (3-\alpha)}\sum\limits_{j=1}^{n-2}b_{j-1}\left\|\delta^3_t \eta^{n-j+1}\right\|_D
\\
&\leq&
\frac{Ch^{r+1}t_{n-2}^{2-\alpha}}{\Gamma (3-\alpha)}\left\|\frac{\partial^3u}{\partial t^3}\right\|_{C([0,T],H^{r+3})},
\nonumber
\end{eqnarray}
for $n\geq 3$,
and, from (\ref{eq5.15}),
\begin{eqnarray}
\label{eq5.26a}
\frac{\Delta t^{1-\alpha}}{\Gamma (3-\alpha)}b_{n-2}\left\|\delta^2_t \eta^{2}\right\|_D \leq
\frac{Ch^{r+1}t_{n-2}^{1-\alpha}}{\Gamma (2-\alpha)}\left\|\frac{\partial^2u}{\partial t^2}\right\|_{C([0,T],H^{r+3})},
\end{eqnarray}
since $ b_j<(2-\alpha)j^{1-\alpha}$ from Lemma \ref{lem:1}(ii).
The estimate (\ref{eq5.26a}) also holds for $n=2$, since $b_0=1$, $t_1=\Delta t$.

Since $1<\alpha<2$, we have  $1+\alpha>3-\alpha$.
Therefore, substituting (\ref{eq5.24}), (\ref{eq5.25}), (\ref{eq5.26}) and (\ref{eq5.26a}) in (\ref{eq5.23}),
 we obtain
\begin{eqnarray}\label{eq5.29}
\left\|\delta_t {\cal F}_u^{n-\frac{1}{2}}\right\|_D\leq C\left({\Delta t}^{3-\alpha}+h^{r+1}\right).
\end{eqnarray}
Also, it follows from (\ref{eq5.18}), (\ref{eq5.16}), and (\ref{eq5.29}) that
\begin{eqnarray}
\label{eq5.30}
\left\|\zeta^n\right\|_{H^2}^2\leq C\left({\Delta t}^{3-\alpha}+h^{r+1}\right),
\end{eqnarray}
since $\zeta^0=0$. Finally, applying the triangle inequality, (\ref{eq5.30}) and Lemma \ref{lem5}  complete the proof.
\qquad \end{proof}

{\textit{Remark.}}
Note that results similar to those in \cite[Remark 2.7]{fairweather2010} are also valid in this paper.

\section{Numerical experiments}
\setcounter{equation}{0}
We present numerical results which support the analyses of preceding sections.
In our implementations, we used
the space of piecewise Hermite bicubics with the standard basis functions \cite{fairweather1978} on identical
uniform partitions of $\overline{I}$ in both the $x$ and $y$
directions with $N_x=N_y=N$. The initial conditions are
approximated using the piecewise Hermite bicubic interpolant.
We present $L^{\infty}$, $L^{2}$, $H^{1}$ and $H^{2}$ norms of the errors at $T=1$ and the
corresponding rates of convergence determined by
\begin{equation}\label{eq:45}
\mbox{Rate} \approx\frac{\log(e_m/e_{m+1})}{\log(h_m/h_{m+1})},
\end{equation}
where $h=1/N_m$ is the step size with $N=N_m$, and $e_m$ is the norm
of the corresponding error.
The $L^{\infty}$ norm of the error is estimated by calculating the maximum error at $100\times100$ equally spaced points in each
sub-rectangle $[x_{i-1},x_i]\times [y_{j-1},y_j]$, $1\leq i,j\leq N$.
The $H^{\ell}$ norm, $\ell=0,1,2$, is computed using the ten-point composite Gauss quadrature rule so that the error due to quadrature does
not affect the convergence rate.

{\bf Example. \cite{sunzhizhong2012}} We consider the problem                                                                                                                                                (\ref{eq:1})--(\ref{eq:3}) with $T=1$ and exact solution
\[
u(x,y,t)=t^{2+\alpha}\sin(\pi x)\sin(\pi y), \quad (x,y)\in[0,1]\times [0,1], \quad t\in(0,T],
\]
so that
\[
\varphi(x,y)=0, \quad \phi(x,y)=0,\quad
(x,y)\in[0,1]\times [0,1].
\]

\begin{table}[h]
\centering \caption{$L^2$ and $L^{\infty}$ errors and convergence rates  with $\Delta t=h^3$, $\alpha=1.5$.} \label{lab:example1-1}
\begin{tabular}{|c|c|c|c|c|c|c|c|c|}
\hline\ \ \  \ \ \  $N$\ \ \  \ \ \    & $L^{2}$ error & \ \ \  \ \ \ Rate\ \ \  \ \ \     & $L^{\infty}$ error       &\ \ \  \ \ \  Rate\ \ \  \ \ \   \\
\hline $4$   & 3.1505e-004  &    & 1.1250e-3 &        \\
\hline $6$   &5.3158-5 & 4.3887& 2.0968e-4 &   4.1433 \\
\hline $9$  & 1.0142e-5  &4.0856  &4.2644e-5 &3.9281\\
\hline $12$  & 3.1984e-5  &4.0115      & 1.3239e-5 &  4.0660   \\
\hline
\end{tabular}
\end{table}

\begin{table}[h]
\centering \caption{$H^1$ and $H^2$ errors and convergence rates with $\Delta t=h^3$, $\alpha=1.5$.} \label{lab:example1-2}
\begin{tabular}{|c|c|c|c|c|c|c|c|c|}
\hline\ \ \  \ \ \  $N$\ \ \  \ \ \    & $H^{1}$ error & \ \ \  \ \ \ Rate\ \ \  \ \ \     & $H^{2}$ error       &\ \ \  \ \ \  Rate\ \ \  \ \ \   \\
\hline $4$   & 6.2642e-3  &    & 1.6180e-1 &        \\
\hline $6$   &1.8393-3 & 3.0224& 7.1581e-2 &   2.0113 \\
\hline $9$  & 5.4400e-4  &3.0044  &3.1744e-2 &2.0054\\
\hline $12$  & 2.2938e-4  &3.0018      & 1.7844e-2 &  2.0023   \\
\hline
\end{tabular}
\end{table}

\begin{table}[h]
\centering \caption{Maximum nodal errors in  $(U^M_h)_x,(U^M_h)_y$ and convergence rates  with $\Delta t=h^3$, $\alpha=1.5$.} \label{lab:example1-4}
\begin{tabular}{|c|c|c|c|c|c|c|c|c|}
\hline $N$ &  Maximum nodal error in $(U^M_h)_x,(U^M_h)_y$ & Rate \\
\hline $4$   & 1.1613e-3  & \\        
\hline $6$   &3.2174e-4& 3.1656\\
\hline $9$  & 6.6230e-5  &3.8983\\
\hline $12$  & 2.1151e-5  & 3.9677\\  
\hline
\end{tabular}
\end{table}

\begin{table}[h]
\centering \caption{$L^2$ and $L^{\infty}$ errors and convergence rates   with $\Delta t=h$, $\alpha=1.1$.} \label{lab:example1-5}
\begin{tabular}{|c|c|c|c|c|c|c|c|c|}
\hline\ \ \  \ \ \  $N$\ \ \  \ \ \    & $L^{2}$ error & \ \ \  \ \ \ Rate\ \ \  \ \ \     & $L^{\infty}$ error       &\ \ \  \ \ \  Rate\ \ \  \ \ \   \\
\hline $40$   &1.3847e-6 &  & 2.7240e-6 &    \\
\hline $80$  &  4.4740e-7  &1.6299  & 6.8463e-7 & 1.9923\\
\hline $160$  &  1.3113e-7  &1.7706      & 1.8874e-7 &  1.8589   \\
\hline $320$  &  3.7282e-8  &1.8144      &   5.2931e-8 &  1.8342   \\
\hline
\end{tabular}
\end{table}

\begin{table}[h]
\centering \caption{$L^2$ and $L^{\infty}$ errors and convergence rates  with $\Delta t=h$, $\alpha=1.45$.} \label{lab:example1-6}
\begin{tabular}{|c|c|c|c|c|c|c|c|c|}
\hline\ \ \  \ \ \  $N$\ \ \  \ \ \    & $L^{2}$ error & \ \ \  \ \ \ Rate\ \ \  \ \ \     & $L^{\infty}$ error       &\ \ \  \ \ \  Rate\ \ \  \ \ \   \\
\hline $20$   & 6.3816e-5  &    &  1.0336e-4 &        \\
\hline $40$   &2.3412e-5 &  1.4467& 3.3951e-5 &    1.6062 \\
\hline $80$  & 8.1811e-6  & 1.5169  &1.1623e-5 & 1.5465\\
\hline $160$  & 2.8253e-6  & 1.5339      & 3.9989e-6 &  1.5393   \\
\hline
\end{tabular}
\end{table}

\begin{table}[h]
\centering \caption{$L^2$ and $L^{\infty}$ errors and convergence rates  with $\Delta t=h$, $\alpha=1.8$.} \label{lab:example1-7}
\begin{tabular}{|c|c|c|c|c|c|c|c|c|}
\hline\ \ \  \ \ \  $N$\ \ \  \ \ \    & $L^{2}$ error & \ \ \  \ \ \ Rate\ \ \  \ \ \     & $L^{\infty}$ error       &\ \ \  \ \ \  Rate\ \ \  \ \ \   \\
\hline $20$   & 3.6625e-4  &    & 5.3143e-4 &        \\
\hline $40$   &1.6144e-4 &  1.1818& 2.2916e-4 &    1.2135 \\
\hline $80$  & 7.0458e-5  & 1.1962  &9.9696e-5 & 1.2007\\
\hline $160$  & 3.0695e-5  & 1.1988      & 4.3413e-5 &  1.1994  \\
\hline
\end{tabular}
\end{table}

For the results in Tables \ref{lab:example1-1}--\ref{lab:example1-4}, we selected the time step $\Delta t=h^3$, since, from our theoretical estimates, the error in the $H^{\ell}$ norm, $\ell=0,1,2$, is expected to be $O(\Delta t ^{3-\alpha}+h^{4-\ell})$ when $r=3$.
In Table \ref{lab:example1-1}, we present the $L^2$ and $L^{\infty}$ errors with their corresponding convergence rates which are seen to be approximately $4$ as expected. Table \ref{lab:example1-2} demonstrates the optimal convergence rates in the $H^{\ell}, \ell=1,2$, norms, consistent with the theory.
In Table \ref{lab:example1-4},
 we present the maximum error in the approximations $(U^M_h)_x,(U^M_h)_y$ to $u_x,\;u_y$, respectively, at the partition nodes, together with the corresponding convergence rate. From this table, we  observe superconvergence, the rate of convergence being approximately $4$ when $r=3$. In Tables \ref{lab:example1-5}--\ref{lab:example1-7}, we present the $L^2$ and $L^{\infty}$ errors and the temporal convergence rates, which is approximately $(3-\alpha)$ as expected.

\section{Concluding Remarks}
\setcounter{equation}{0}
We have formulated and analyzed an ADI OSC Crank-Nicolson method for the two-dimensional fractional diffusion-wave equation. Under certain smoothness assumptions, we have proved that the method is of optimal global accuracy and exhibits superconvergence phenomena. The results of numerical experiments confirm the analysis.

\renewcommand{\theequation}{A.\arabic{equation}}
\setcounter{equation}{0}  
\section*{APPENDIX A: Proof of (\ref{eq4.32})}

\vspace{.25in}
\noindent
\begin{lemma}
There exist non-negative integers $m,q$,  $1\leq m\leq n$ and $0\leq q\leq n-1$, such that
\begin{eqnarray}
\label{eqA4.1}
\sum\limits_{j=1}^{n}b_{j-1}\left\langle\phi,\Delta U_h^{j}-\Delta U_h^{j-1}\right\rangle
\leq
b_{q}\left\langle\phi,\Delta U_h^{m}-\Delta U_h^{0}\right\rangle,
\end{eqnarray}
\end{lemma}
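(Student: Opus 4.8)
The plan is to reduce \eqref{eqA4.1} to a purely scalar statement and then dispatch it with a one-line Abel summation estimate. Set $v_j := \langle \phi, \Delta U_h^{j}\rangle$ for $0\le j\le n$. Since the discrete inner product $\langle\cdot,\cdot\rangle$ is bilinear, $\langle\phi,\Delta U_h^{j}-\Delta U_h^{j-1}\rangle = v_j - v_{j-1}$ and $\langle\phi,\Delta U_h^{m}-\Delta U_h^{0}\rangle = v_m - v_0$, so \eqref{eqA4.1} is equivalent to: there exist $1\le m\le n$, $0\le q\le n-1$ with $S \le b_q(v_m-v_0)$, where $S := \sum_{j=1}^{n} b_{j-1}(v_j - v_{j-1})$. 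This is now just a claim about the two scalar sequences $\{b_j\}_{j\ge 0}$ and $\{v_j\}_{j=0}^n$.

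First I would apply summation by parts: writing out $v_j - v_{j-1}$ and reindexing the "shifted" sum gives
\[
S \;=\; b_{n-1}v_n \;-\; b_0 v_0 \;+\; \sum_{j=1}^{n-1}(b_{j-1}-b_j)\,v_j .
\]
By Lemma~\ref{lem:1}(i) we have $b_0 = 1$, $b_{n-1}>0$, and $b_{j-1}-b_j>0$ for $1\le j\le n-1$; moreover the telescoping identity $b_{n-1}+\sum_{j=1}^{n-1}(b_{j-1}-b_j) = b_0 = 1$ holds. Subtracting $v_0$ times this identity from the previous display recasts $S$ as a convex combination,
\[
S \;=\; b_{n-1}(v_n - v_0) \;+\; \sum_{j=1}^{n-1}(b_{j-1}-b_j)(v_j - v_0),
\]
with strictly positive weights that sum to one.

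The conclusion then follows by bounding a convex combination by its largest ingredient. Choose $m\in\{1,\dots,n\}$ with $v_m = \max_{1\le j\le n} v_j$; then $v_j - v_0 \le v_m - v_0$ for every $j$, so $S \le v_m - v_0$, regardless of the sign of $v_m - v_0$. Since $b_0 = 1$, this is exactly $S \le b_0(v_m - v_0) = b_q(v_m - v_0)$ with $q = 0$, and $0$ lies in the admissible range $0\le q\le n-1$. Translating back through $v_j = \langle\phi,\Delta U_h^{j}\rangle$ yields \eqref{eqA4.1}. (An alternative is induction on $n$: assuming the bound for $n-1$, peel off the term $b_{n-1}(v_n - v_{n-1})$ and treat the cases $v_n\le v_{n-1}$ and $v_n> v_{n-1}$ separately; this variant is what one would use if a value $q\ge 1$ were wanted.)

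There is no substantive obstacle — this is a routine summation-by-parts estimate, and the only structural inputs are $b_0 = 1$, the monotonicity $1=b_0>b_1>\cdots$, and the telescoping identity, all from Lemma~\ref{lem:1}. The single point that needs care is that the differences $v_j - v_0$ need not be nonnegative, so one must take $m$ to be the index maximizing $v_j$ itself (not $|v_j|$); with that choice the final inequality is valid whatever the sign of $v_m - v_0$.
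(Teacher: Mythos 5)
Your proof is correct, and it takes a genuinely different and cleaner route than the paper's. The paper proves the lemma by induction on $n$: the inductive step splits into four sign cases when $m=n$, then further nested cases when $m<n$, ending with an informal ``repeating the above process'' step. You instead apply Abel summation to get
\[
S=\sum_{j=1}^{n}b_{j-1}(v_j-v_{j-1})=b_{n-1}(v_n-v_0)+\sum_{j=1}^{n-1}(b_{j-1}-b_j)(v_j-v_0),
\]
observe via Lemma~\ref{lem:1}(i),(iii) that the weights are positive and sum to $b_0=1$, and bound the convex combination by its largest term. This is airtight, avoids the case analysis and the hand-waved iteration entirely, and correctly handles the sign issue by maximizing $v_j$ itself rather than $|v_j|$. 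The one structural difference worth flagging is that your argument always delivers $q=0$, whereas the paper's induction leaves $q$ unspecified in $\{0,\dots,n-1\}$; since the lemma is a pure existence statement this is perfectly adequate, but in the downstream estimate (\ref{eq4.33}) the factor $t_q^{1-\alpha}$ is singular at $q=0$ and must be read as $t_1^{1-\alpha}=\Delta t^{1-\alpha}$ (the paper's remark ``$b_0=1$, $t_1=\Delta t$'' after (\ref{eq4.32}) is doing exactly this). So with your proof one should simply carry $b_0\Delta t^{1-\alpha}=t_1^{1-\alpha}$ into (\ref{eq4.33}); nothing downstream is lost, and arguably the bookkeeping becomes more transparent.
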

\begin{proof}
We  prove the result by induction.
When $n=1$,
\begin{eqnarray}
\label{eqA4.2}
b_{0}\left\langle\phi,\Delta U_h^{1}-\Delta U_h^{0}\right\rangle
\leq
b_{q}\left\langle\phi,\Delta U_h^{m}-\Delta U_h^{0}\right\rangle,
\end{eqnarray}
and $q=0, m=1$. Therefore, (\ref{eqA4.1}) holds for the case $n=1$.

Now suppose that
\begin{eqnarray}
\label{eqA4.3}
\sum\limits_{j=1}^{n}b_{j-1}\left\langle\phi,\Delta U_h^{j}-\Delta U_h^{j-1}\right\rangle
\leq
b_{q}\left\langle\phi,\Delta U_h^{m}-\Delta U_h^{0}\right\rangle,&&
\\
 1\leq m\leq n, \quad&&0\leq q\leq n-1, \nonumber
\end{eqnarray}
Then
\begin{eqnarray}
\label{eqA4.5}
\lefteqn{\sum\limits_{j=1}^{n+1}b_{j-1}\left\langle\phi,\Delta U_h^{j}-\Delta U_h^{j-1}\right\rangle}
\\ &
=&\sum\limits_{j=1}^{n}b_{j-1}\left\langle\phi,\Delta U_h^{j}-\Delta U_h^{j-1}\right\rangle+b_{n}\left\langle\phi,\Delta U_h^{n+1}-\Delta U_h^{n}\right\rangle.
\nonumber
\\
\nonumber
&\leq&
b_{q}\left\langle\phi,\Delta U_h^{m}-\Delta U_h^{0}\right\rangle+b_{n}\left\langle\phi,\Delta U_h^{n+1}-\Delta U_h^{n}\right\rangle.
\nonumber
\end{eqnarray}
 on using the induction hypothesis (\ref{eqA4.3}).

\begin{enumerate}
\item First consider the case $m=n$.

\vspace{.2in}
\begin{enumerate}
\item If $\left\langle\phi,\Delta U_h^{n}-\Delta U_h^{0}\right\rangle\geq 0$ and $\left\langle\phi,\Delta U_h^{n+1}-\Delta U_h^{n}\right\rangle \geq 0$, then
\begin{eqnarray*}
\label{eqA4.7}
\lefteqn{
b_{q}\left\langle\phi,\Delta U_h^{n}-\Delta U_h^{0}\right\rangle+b_{n}\left\langle\phi,\Delta U_h^{n+1}-\Delta U_h^{n}\right\rangle}
\\\\
&\leq&b_{q}\left\langle\phi,\Delta U_h^{n}-\Delta U_h^{0}\right\rangle+b_{q}\left\langle\phi,\Delta U_h^{n+1}-\Delta U_h^{n}\right\rangle
\nonumber  \\\\&=& b_{q}\left\langle\phi,\Delta U_h^{n+1}-\Delta U_h^{0}\right\rangle
,\quad  0\leq q\leq n-1.
\nonumber
\end{eqnarray*}

\medskip
\item If $\left\langle\phi,\Delta U_h^{n}-\Delta U_h^{0}\right\rangle\geq 0$ and $\left\langle\phi,\Delta U_h^{n+1}-\Delta U_h^{n}\right\rangle \leq 0$, then
\begin{eqnarray*}
\label{eqA4.8}
\lefteqn{
b_{q}\left\langle\phi,\Delta U_h^{n}-\Delta U_h^{0}\right\rangle+b_{n}\left\langle\phi,\Delta U_h^{n+1}-\Delta U_h^{n}\right\rangle}
\\
\nonumber
\\
&\leq& b_{q}\left\langle\phi,\Delta U_h^{n}-\Delta U_h^{0}\right\rangle,\quad 0\leq q\leq n-1.
\nonumber
\end{eqnarray*}

\medskip
\item If $\left\langle\phi,\Delta U_h^{n}-\Delta U_h^{0}\right\rangle\leq 0$ and $\left\langle\phi,\Delta U_h^{n+1}-\Delta U_h^{n}\right\rangle \geq 0$, then
\begin{eqnarray*}
\label{eqA4.9}
\lefteqn{
b_{q}\left\langle\phi,\Delta U_h^{n}-\Delta U_h^{0}\right\rangle+b_{n}\left\langle\phi,\Delta U_h^{n+1}-\Delta U_h^{n}\right\rangle}
\\\\ &
\leq& b_{n}\left\langle\phi,\Delta U_h^{n}-\Delta U_h^{0}\right\rangle+b_{n}\left\langle\phi,\Delta U_h^{n+1}-\Delta U_h^{n}\right\rangle
\nonumber  \\\\&=&  b_{n}\left\langle\phi,\Delta U_h^{n+1}-\Delta U_h^{0}\right\rangle.
\nonumber
\end{eqnarray*}

\medskip
\item
 If $\left\langle\phi,\Delta U_h^{n}-\Delta U_h^{0}\right\rangle\leq 0$ and $\left\langle\phi,\Delta U_h^{n+1}-\Delta U_h^{n}\right\rangle \leq 0$, then
\begin{eqnarray*}
\label{eqA4.10}
\lefteqn{
b_{q}\left\langle\phi,\Delta U_h^{n}-\Delta U_h^{0}\right\rangle+b_{n}\left\langle\phi,\Delta U_h^{n+1}-\Delta U_h^{n}\right\rangle}
\\ \\&
\leq& b_{n}\left\langle\phi,\Delta U_h^{n}-\Delta U_h^{0}\right\rangle+b_{n}\left\langle\phi,\Delta U_h^{n+1}-\Delta U_h^{n}\right\rangle
\nonumber  \\\\&=&b_{n}\left\langle\phi,\Delta U_h^{n+1}-\Delta U_h^{0}\right\rangle.
\nonumber
\end{eqnarray*}
\end{enumerate}
Therefore, in this case, (\ref{eqA4.1}) holds.

\vspace{.2in}
\item  Now consider the case $m < n$. Recall from (\ref{eqA4.5}) that
\begin{eqnarray}
\label{eqA4.11}
\lefteqn{\sum\limits_{j=1}^{n+1}b_{j-1}\left\langle\phi,\Delta U_h^{j}-\Delta U_h^{j-1}\right\rangle}
\\ &
=&\sum\limits_{j=1}^{n}b_{j-1}\left\langle\phi,\Delta U_h^{j}-\Delta U_h^{j-1}\right\rangle+b_{n}\left\langle\phi,\Delta U_h^{n+1}-\Delta U_h^{n}\right\rangle.\nonumber
\end{eqnarray}

\medskip
\begin{enumerate}
\item
If $\left\langle\phi,\Delta U_h^{n+1}-\Delta U_h^{n}\right\rangle\leq 0$, then, by using the induction hypothesis (\ref{eqA4.3}),
\begin{eqnarray*}
\label{eqA4.12}
\lefteqn{\sum\limits_{j=1}^{n+1}b_{j-1}\left\langle\phi,\Delta U_h^{j}-\Delta U_h^{j-1}\right\rangle
\leq
 b_{q}\left\langle\phi,\Delta U_h^{m}-\Delta U_h^{0}\right\rangle,}\\&& \hspace{2.in} 1\leq m\leq n, \quad  0\leq q\leq n-1.
\nonumber
\end{eqnarray*}

\medskip
\item If $\left\langle\phi,\Delta U_h^{n+1}-\Delta U_h^{n}\right\rangle\geq 0$, then, from (\ref{eqA4.11}),
\begin{eqnarray*}
\label{eqA4.13}
\lefteqn{\sum\limits_{j=1}^{n+1}b_{j-1}\left\langle\phi,\Delta U_h^{j}-\Delta U_h^{j-1}\right\rangle}
\\\\ &&
\leq
\sum\limits_{j=1}^{n-1}b_{j-1}\left\langle\phi,\Delta U_h^{j}-\Delta U_h^{j-1}\right\rangle+b_{n-1}\left\langle\phi,\Delta U_h^{n+1}-\Delta U_h^{n-1}\right\rangle,
\nonumber
\end{eqnarray*}
since $b_n < b_{n-1}$.

\medskip
\begin{enumerate}
\item If $\left\langle\phi,\Delta U_h^{n+1}-\Delta U_h^{n-1}\right\rangle\leq 0$, then, by using the induction hypothesis (\ref{eqA4.3}),
\begin{eqnarray*}
\label{eqA4.14}
\lefteqn{\sum\limits_{j=1}^{n+1}b_{j-1}\left\langle\phi,\Delta U_h^{j}-\Delta U_h^{j-1}\right\rangle}
\\ \\&
\leq&
b_{q}\left\langle\phi,\Delta U_h^{m}-\Delta U_h^{0}\right\rangle, 1\leq m\leq n-1, 0\leq q\leq n-2.
\nonumber
\end{eqnarray*}

\item If $\left\langle\phi,\Delta U_h^{n+1}-\Delta U_h^{n-1}\right\rangle\geq 0$, from (\ref{eqA4.13}), then
\begin{eqnarray*}
\label{eqA4.15}
\lefteqn{\sum\limits_{j=1}^{n+1}b_{j-1}\left\langle\phi,\Delta U_h^{j}-\Delta U_h^{j-1}\right\rangle}
\\\\ &
\leq&
\sum\limits_{j=1}^{n-2}b_{j-1}\left\langle\phi,\Delta U_h^{j}-\Delta U_h^{j-1}\right\rangle
+b_{n-2}\left\langle\phi,\Delta U_h^{n+1}-\Delta U_h^{n-2}\right\rangle.
\nonumber
\end{eqnarray*}
\end{enumerate}
\end{enumerate}
\end{enumerate}
Repeating the above process, we obtain
\begin{eqnarray*}
\label{eqA4.16}
\lefteqn{\sum\limits_{j=1}^{n+1}b_{j-1}\left\langle\phi,\Delta U_h^{j}-\Delta U_h^{j-1}\right\rangle}
\\\\ &
\leq&
b_{q}\left\langle\phi,\Delta U_h^{m}-\Delta U_h^{0}\right\rangle, \quad 1\leq m\leq n+1, 0\leq q\leq n,
\nonumber
\end{eqnarray*}
which completes the proof of (\ref{eqA4.1}).
\end{proof}

\renewcommand{\theequation}{B.\arabic{equation}}
\setcounter{equation}{0}
\section*{APPENDIX B: Proof of (\ref{eq5.24})}

If the hypotheses of Theorem (\ref{th:convergence}) are satisfied,
we will prove that
\[
\left \|\delta_t R_{\alpha}^{n-\frac{1}{2}}\right\|_D\;\leq\;
C\Delta t^{3-\alpha}.
\]
We set $\tau=\Delta t$, $v=u_t$, and $v^n$ for the approximation to $v(x,y,t_n)$.
From (\ref{eq:1}), we have
\bea
\label{eqB1}
&&\frac{\tau^{1-\alpha}}{\Gamma (3-\alpha)}\left[b_0 v^{n-\frac{1}{2}}-\sum\limits_{j=1}^{n-1}(b_{n-j-1}-b_{n-j}) v^{j-\frac{1}{2}}-b_{n-1}v^0\right]
\\
\nonumber\\
&=&\Delta u^{n-\frac{1}{2}}+f^{n-\frac{1}{2}}-\left(\widetilde{R_t^{\alpha}}\right)^{n-\frac{1}{2}},\nonumber
\eea
where
\beas
\left(\widetilde{R_t^{\alpha}}\right)^{n}
&=&\frac{1}{\Gamma (2-\alpha)}\int_0^{t_{n}}\frac{\partial v(x,y,s)}{\partial s}\frac{ds}{(t_{n}-s)^{\alpha-1}}
\\
\nonumber\\
&&
-\frac{\tau^{1-\alpha}}{\Gamma (3-\alpha)}\left[b_0 v^{n}-\sum\limits_{j=1}^{n-1}(b_{n-j-1}-b_{n-j}) v^{j}-b_{n-1}v^0\right]
\nonumber
\\
\nonumber\\
&=&\frac{1}{\Gamma (2-\alpha)}\int_0^{t_{n}}\frac{\partial v(x,y,s)}{\partial s}\frac{ds}{(t_{n}-s)^{\alpha-1}}
\nonumber
\\
\nonumber
\\
&&
-\frac{\tau^{1-\alpha}}{\Gamma (3-\alpha)}\sum\limits_{j=1}^n\frac{v^{j}-v^{j-1}}{\tau }\int_{t_{j-1}}^{t_{j}}(t_{n}-s)^{1-\alpha}ds
\nonumber
\\
\nonumber
\\
&=&\frac{1}{\Gamma (2-\alpha)}\sum\limits_{j=1}^{n}\int_{t_{j-1}}^{t_j}\left\{\frac{\partial v(x,y,s)}{\partial s}
 -\frac{v^{j}-v^{j-1}}{\tau }\right\}\frac{ds}{(t_{n}-s)^{\alpha-1}}.
 \nonumber
\eeas
Using Taylor's theorem  with integral remainder, and interchanging the order of integration in the resulting equation, we obtain
\bea
\label{eqB7}
\left(\widetilde{R_t^{\alpha}}\right)^{n}
&=&\frac{1}{\Gamma (2-\alpha)\tau}\sum\limits_{j=1}^{n}\int_{t_{j-1}}^{t_j}\left[\int_{t_{j-1}}^{s}\frac{\partial^2 v(x,y,t)}{\partial t^2}(t-t_{j-1})dt
\right.
\\
\nonumber
\\ &\qquad&
\left.
-\int^{t_{j}}_{s}\frac{\partial^2 v(x,y,t)}{\partial t^2}(t_{j}-t)dt
\right]\frac{ds}{(t_{n}-s)^{\alpha-1}}
\nonumber\\\nonumber
\\
&=&\frac{1}{\Gamma (3-\alpha)}\sum\limits_{j=1}^{n}\int_{t_{j-1}}^{t_j}\left\{(t_{n}-s)^{2-\alpha}
-\left[\frac{s-t_{j-1}}{\tau}(t_{n}-t_j)^{2-\alpha}\right.\right.
\nonumber\\ &\qquad&
+\left.\left.\frac{t_{j}-s}{\tau}(t_{n}-t_{j-1})^{2-\alpha}\right]\right\}\frac{\partial^2 v}{\partial s^2}(x,y,s)ds
\nonumber\\
\nonumber
\\
&=&\frac{1}{\Gamma (3-\alpha)}\left(R_t^{\alpha}\right)^{n}.\nonumber
\eea
With
$g(s)=(t_{n}-s)^{2-\alpha}$,
\begin{eqnarray}
\label{eqR-alpha-t-1}
\hspace{.1in}(R_t^{\alpha})^{n}&=&
\sum\limits_{k=1}^{n}\int_{t_{k-1}}^{t_k}\left\{g(s)
-\left[\frac{s-t_{k-1}}{\tau}g(t_k)
+\frac{t_{k}-s}{\tau}g(t_{k-1})\right]\right\}\frac{\partial^2 v}{\partial s^2}(x,y,s)ds
\\ \nonumber\\
&=&
\widehat{r}_{q}+ r_q
\nonumber,
\end{eqnarray}
where
\begin{eqnarray*}
\label{eq10}
&&\\
\widehat{r}_{q}&=&
\sum\limits_{k=1}^{n-q}\int_{t_{k-1}}^{t_k}\left\{g(s)
-\left[\frac{s-t_{k-1}}{\tau}g(t_k)
+\frac{t_{k}-s}{\tau}g(t_{k-1})\right]\right\}\frac{\partial^2 v}{\partial s^2}(x,y,s)ds
\nonumber
\\\\
r_{q}&=&
\sum\limits_{k=n-q+1}^{n}\int_{t_{k-1}}^{t_k}\left\{g(s)
-\left[\frac{s-t_{k-1}}{\tau}g(t_k)
+\frac{t_{k}-s}{\tau}g(t_{k-1})\right]\right\}\frac{\partial^2 v}{\partial s^2}(x,y,s)ds.
\nonumber
\end{eqnarray*}

On applying Lemma $2.1$ in [13], we have
\[
\widehat{r}_{q}
=\frac{(2-\alpha)(\alpha-1)}{2}
\sum\limits_{k=1}^{n-q}(t_{n}-\xi_{k})^{-\alpha}\int_{t_{k-1}}^{t_k}(s-t_{k-1})
(t_{k}-s)\frac{\partial^2 v}{\partial s^2}(x,y,s)ds,
\]
where $\xi_k\in(t_{k-1},t_k)$.
Then
\begin{eqnarray*}
\label{eq17}
&&
\\ &&
\delta_t\widehat{r}_{q}
=\frac{(2-\alpha)(\alpha-1)}{2}
\sum\limits_{k=1}^{n-q-1}\delta_t(t_{n}-\xi_{k})^{-\alpha}\int_{t_{k-1}}^{t_k}(s-t_{k-1})
(t_{k}-s)\frac{\partial^2 v}{\partial s^2}(x,y,s)ds
\nonumber\\ \nonumber\\&&
+\frac{(2-\alpha)(\alpha-1)}{2}
\frac{(t_{n}-\xi_{n-q})^{-\alpha}}{\tau}\int_{t_{n-q-1}}^{t_{n-q}}(s-t_{n-q-1})
(t_{n-q}-s)\frac{\partial^2 v}{\partial s^2}(x,y,s)ds,
\nonumber
\end{eqnarray*}
and since
\[
\int_{t_{k-1}}^{t_k}(s-t_{k-1})
(t_{k}-s)ds=\frac{\tau^3}{6},
\]
we obtain
\begin{eqnarray}
\label{eq19}
\left|\delta_t \widehat{r}_{q}\right|
 &\leq&
C\tau^3\max\limits_{t_0\leq t\leq t_n}\left|\frac{\partial^2 v}{\partial t^2}(x,y,t)\right|
\left|\sum\limits_{k=1}^{n-q-1}(\widehat{\xi}_{n-1}-\xi_{k})^{-\alpha-1}\right|
\\ & &\nonumber\\
&&+C\tau^3\max\limits_{t_0\leq t\leq t_n}\left|\frac{\partial^2 v}{\partial t^2}(x,y,t)\right|
\frac{(t_{n}-t_{n-q})^{-\alpha}}{\tau}
\nonumber\\ \nonumber\\&\leq&
C\tau^3\max\limits_{t_0\leq t\leq t_n}\left|\frac{\partial^2 v}{\partial t^2}(x,y,t)\right|
\left|\sum\limits_{k=1}^{n-q-1}(t_{n-1}-t_{k})^{-\alpha-1}\right|
\nonumber\\ \nonumber\\& &
+C\tau^2t_q^{-\alpha}\max\limits_{t_0\leq t\leq t_n}\left|\frac{\partial^2 v}{\partial t^2}(x,y,t)\right|
\nonumber\\ \nonumber\\&\leq&
C\tau^2t_{q-1}^{-\alpha}\max\limits_{t_0\leq t\leq t_n}\left|\frac{\partial^2 v}{\partial t^2}(x,y,t)\right|,
\nonumber
\end{eqnarray}
where $\widehat{\xi}_{n-1}\in(t_{n-1},t_n)$, $\xi_k\in(t_{k-1},t_k)$.

To estimate $r_q$, we first note that, from Taylor's theorem, we have
\[
\frac{\partial^2 v}{\partial s^2}(x,y,s)=\frac{\partial^2 v}{\partial s^2}(x,y,t_{n})+(s-t_{n})\frac{\partial^3 v}{\partial s^3}(x,y,\overline{\xi}_{n}),\quad \overline{\xi}_{n}\in (s,t_{n}),
\]
so that
\begin{eqnarray*}
\label{eq12}
&&
\\ &&
r_{q}=
\sum\limits_{k=n-q+1}^{n}\int_{t_{k-1}}^{t_k}\left\{g(s)
-\left[\frac{s-t_{k-1}}{\tau}g(t_{k})
+\frac{t_{k}-s}{\tau}g(t_{k-1})\right]\right\}\frac{\partial^2 v}{\partial s^2}(x,y,t_{n})ds
\nonumber\\ &&
+
\sum\limits_{k=n-q+1}^{n}\int_{t_{k-1}}^{t_k}\left\{g(s)
-\left[\frac{s-t_{k-1}}{\tau}g(t_{k})
+\frac{t_{k}-s}{\tau}g(t_{k-1})\right]\right\}(s-t_{n})\frac{\partial^3 v}{\partial s^3}(x,y,\overline{\xi}_{n})ds.
\nonumber
\end{eqnarray*}
Since
\[
\int_{t_{k-1}}^{t_k}\left(\frac{s-t_{k-1}}{\tau}g(t_{k})
+\frac{t_{k}-s}{\tau}g(t_{k-1})\right)ds=\frac{\tau}{2}\left[g(t_k)+g(t_{k-1})\right],
\]
we have
\begin{eqnarray*}
\label{eq14}
&&
\\ &&
r_{q}=\frac{\partial^2 v(x,y,t_{n})}{\partial s^2}\left\{\int_{t_{n-q}}^{t_n}g(s)ds
-\sum\limits_{k=n-q+1}^{n}\frac{\tau}{2}\left[g(t_k)+g(t_{k-1})\right]\right\}
\nonumber\\ \nonumber\\&&
+
\sum\limits_{k=n-q+1}^{n}\int_{t_{k-1}}^{t_k}\left\{g(s)
-\left[\frac{s-t_{k-1}}{\tau}g(t_{k})
+\frac{t_{k}-s}{\tau}g(t_{k-1})\right]\right\}(s-t_{n})\frac{\partial^3 v}{\partial s^3}(x,y,\overline{\xi}_{n})ds
\nonumber\\ \nonumber\\ &&
=\frac{\partial^2 v}{\partial s^2}(x,y,t_{n})\left\{\frac{q^{3-\alpha}}{3-\alpha}
-\left[\frac{q^{2-\alpha}}{2}
+(q-1)^{2-\alpha}+(q-2)^{2-\alpha}+\cdots+1^{2-\alpha}\right]\right\}\tau^{3-\alpha}
\nonumber\\ \nonumber\\&&
+
\sum\limits_{k=n-q+1}^{n}\int_{t_{k-1}}^{t_k}\left\{g(s)
-\left[\frac{s-t_{k-1}}{\tau}g(t_{k})
+\frac{t_{k}-s}{\tau}g(t_{k-1})\right]\right\}(s-t_{n})\frac{\partial^3 v}{\partial s^3}(x,y,\overline{\xi}_{n})ds.
\nonumber
\end{eqnarray*}
Thus,
\begin{eqnarray}
\label{eq15}
\lefteqn{\hspace{.5in}\left|\delta_t r_{q}\right|}
\\\nonumber\\  &\leq&
\left|\delta_t\frac{\partial^2 v}{\partial s^2}(x,y,t_{n})\right|\left|\frac{q^{3-\alpha}}{3-\alpha}
-\left[\frac{q^{2-\alpha}}{2}
+(q-1)^{2-\alpha}+(q-2)^{2-\alpha}+\cdots+1^{2-\alpha}\right]\right|\tau^{3-\alpha}
\nonumber\\\nonumber\\ &+&
\frac{1}{\tau}\left|\sum\limits_{k=n-q+1}^{n}\int_{t_{k-1}}^{t_k}\left\{g(s)
-\left[\frac{s-t_{k-1}}{\tau}g(t_{k})
+\frac{t_{k}-s}{\tau}g(t_{k-1})\right]\right\}(s-t_{n})\frac{\partial^3 v}{\partial s^3}(\cdot,\overline{\xi}_{n})ds\right.
\nonumber\\\nonumber\\ &-&
\left.\sum\limits_{k=n-q}^{n-1}\int_{t_{k-1}}^{t_k}\left\{g(s)
-\left[\frac{s-t_{k-1}}{\tau}g(t_{k})
+\frac{t_{k}-s}{\tau}g(t_{k-1})\right]\right\}(s-t_{n-1})\frac{\partial^3 v(\cdot,\overline{\xi}_{n-1})}{\partial s^3}ds\right|
\nonumber\\\nonumber\\ &\leq&
C\max\limits_{t_0\leq t\leq t_n}\left|\frac{\partial^3 v}{\partial t^3}(x,y,t)\right|\tau^{3-\alpha}.
\nonumber
\end{eqnarray}

By using the notation $v^{n-\frac{1}{2}}=\frac{1}{2}(v^n+v^{n-1})$, $\delta_t u^{n}=\frac{u^n-u^{n-1}}{\tau}$, and the Lemma 2.2 in \cite{sunzhizhong2012}, we have
\bea
\label{eqB3}
v^{n-\frac{1}{2}}=\delta_t u^{n}+(R_{t})^{n-\frac{1}{2}},\quad 1\leq n\leq M,
\eea
where
\begin{eqnarray}
\label{eqR-t}
\\&&
(R_t)^{n-\frac{1}{2}}=\frac{\tau^{2}}{16}\int_{0}^{1}\left[\frac{\partial^3 u}{\partial t^3}(x,y,t_{n-\frac{1}{2}}+\frac{s\tau}{2})+\frac{\partial^3 u}{\partial t^3}(x,y,t_{n-\frac{1}{2}}-\frac{s\tau}{2})\right](1-s^2) ds\nonumber.
\end{eqnarray}

Substituting (\ref{eqB3}) into (\ref{eqB1}), then
the truncation error in (\ref{eq:3-0}), $R_{\alpha}^{n-\frac{1}{2}}$, can be written as
\begin{eqnarray}
\label{eqR-alpha0}\\
\nonumber
R_{\alpha}^{n-\frac{1}{2}}&=&(\widetilde{R_t^{\alpha}})^{n-\frac{1}{2}}-\frac{\tau^{1-\alpha}}{\Gamma (3-\alpha)}\left[(R_{t})^{n-\frac{1}{2}}
-\sum\limits_{j=1}^{n-1}(b_{n-j-1}-b_{n-j})(R_{t})^{j-\frac{1}{2}}\right]
\\
\nonumber
\\
\nonumber
&=&\frac{1}{\Gamma (3-\alpha)}\left \{({R_t^{\alpha}})^{n-\frac{1}{2}}-\tau^{1-\alpha}\left [(R_{t})^{n-\frac{1}{2}}
-\sum\limits_{j=1}^{n-1}(b_{n-j-1}-b_{n-j})(R_{t})^{j-\frac{1}{2}}\right] \right \}.
\end{eqnarray}
Since $v=u_t$, then combining (\ref{eqR-alpha-t-1}), (\ref{eq19}) and (\ref{eq15}), we have
\begin{eqnarray}
\label{eq20}
\left|\delta_t(R_t^{\alpha})^{n}\right|
&\leq&
\left|\delta_tr_q\right|+\left|\delta_t\widehat{r}_{q}\right|
\\
\nonumber
&\leq&
C\tau^2t_{q-1}^{-\alpha}\max\limits_{t_0\leq t\leq t_n}\left|\frac{\partial^3 u}{\partial t^3}(x,y,t)\right|+
C\max\limits_{t_0\leq t\leq t_n}\left|\frac{\partial^4 u}{\partial t^4}(x,y,t)\right|\tau^{3-\alpha}.
\end{eqnarray}
For $ 2\leq n\leq M$, we have
\begin{eqnarray}
\label{eq5}
\delta_t R_{\alpha}^{n-\frac{1}{2}}
&=&\frac{1}{\Gamma (3-\alpha)}\left \{\delta_t(R_t^{\alpha})^{n-\frac{1}{2}}-\tau^{1-\alpha}\left[\delta_t(R_{t})^{n-\frac{1}{2}}\right.\right.
\\
&&\left .\left.
\hspace{.25in}-\sum\limits_{j=1}^{n-2}(b_{j-1}-b_{j})\delta_t(R_{t})^{n-j-\frac{1}{2}}-\frac{b_{n-2}-b_{n-1}}{\tau }(R_{t})^{\frac{1}{2}}\right]\right\},
\nonumber
\end{eqnarray}
Thus, for $n\geq 2$
\begin{eqnarray}
\label{eq6}
&&\left \|\delta_t R_{\alpha}^{n-\frac{1}{2}}\right\|_D\leq \frac{1}{\Gamma (3-\alpha)}\left\{\left \|\delta_t(R_t^{\alpha})^{n-\frac{1}{2}}\right\|_D+\tau^{1-\alpha}\left \|\delta_t(R_{t})^{n-\frac{1}{2}}\right\|_D\right.
\\ &&\left.
+\tau^{1-\alpha}\left[
\sum\limits_{j=1}^{n-2}(b_{j-1}-b_{j})\left \|\delta_t(R_{t})^{n-j-\frac{1}{2}}\right\|_D+\frac{b_{n-2}-b_{n-1}}{\tau }\left \|(R_{t})^{\frac{1}{2}}\right\|_D\right]\right\}.
\nonumber
\end{eqnarray}
From (\ref{eqR-t}), we have
\begin{eqnarray}
\label{eqR-t-1}
\left|(R_t)^{n-\frac{1}{2}}\right|\leq \frac{\tau^2}{12}\max\limits_{0\leq t \leq T}\left|\frac{\partial^3 u}{\partial t^3}(x,y,t)\right|.
\end{eqnarray}
Combining (\ref{eqR-t}) and (\ref{eqR-t-1}), we have
\begin{eqnarray}
\label{eq7}
\left|\delta_t(R_t)^{n-\frac{1}{2}}\right|\leq \frac{\tau^2}{12}\max\limits_{0\leq t \leq T}\left|\frac{\partial^4 u}{\partial t^4}(x,y,t)\right|.
\end{eqnarray}

Since $\sum\limits_{j=1}^{n-2}(b_{j-1}-b_{j})=1-b_{n-2}\leq1$, and $(2-\alpha)(j+1)^{1-\alpha}< b_j<(2-\alpha)j^{1-\alpha}$, we have
\begin{eqnarray*}
\label{eq8}
b_{n-2}-b_{n-1}&<&(2-\alpha)\left[(n-2)^{1-\alpha}-n^{1-\alpha}\right]
\\ &=&
(2-\alpha)(1-\alpha)\int_{n}^{n-2}x^{-\alpha}dx
\nonumber \\ &\leq&
2(2-\alpha)(\alpha-1)(n-2)^{-\alpha}.
\nonumber
\end{eqnarray*}
Thus
\begin{equation}
\label{eq21}
\tau^{1-\alpha}\left[
\sum\limits_{j=1}^{n-2}(b_{j-1}-b_{j})\left|\delta_t(R_{t})^{n-j-\frac{1}{2}}\right|\right]
\leq
\frac{\tau^{3-\alpha}}{12}\max\limits_{0\leq t \leq T}\left|\frac{\partial^4 u}{\partial t^4}(x,y,t)\right|,
\end{equation}
and
\begin{equation}
\label{eq22}
\frac{b_{n-2}-b_{n-1}}{\tau^{\alpha} }\left|(R_{t})^{\frac{1}{2}}\right|
\leq
\frac{\tau^{2}(2-\alpha)(\alpha-1)t_{n-2}^{-\alpha}}{6}\max\limits_{0\leq t \leq T}\left|\frac{\partial^4 u}{\partial t^4}(x,y,t)\right|
.
\end{equation}
Combining (\ref{eq20}), (\ref{eq5}),  (\ref{eq7}), (\ref{eq21}) and (\ref{eq22}), we obtain
\begin{eqnarray}
\label{eq23}
\left |\delta_t R_{\alpha}^{n-\frac{1}{2}}\right|&\leq& C\left\{\tau^2t_{q-1}^{-\alpha}\max\limits_{t_0\leq t\leq t_n}\left|\frac{\partial^3 u}{\partial t^3}(x,y,t)\right|+\tau^{3-\alpha}\max\limits_{0\leq t \leq T}\left|\frac{\partial^4 u}{\partial t^4}(x,y,t)\right|\right.
\\
&\quad&
\left.+\frac{\tau^{2}(2-\alpha)(\alpha-1)t_{n-2}^{-\alpha}}{6}\max\limits_{0\leq t \leq T}\left|\frac{\partial^4 u}{\partial t^4}(x,y,t)\right|\right\}.
\nonumber
\end{eqnarray}
Since $1<\alpha<2$, we have $2>3-\alpha$. Therefore, using the (\ref{eq23}),
we obtain
\[
\left \|\delta_t R_{\alpha}^{n-\frac{1}{2}}\right\|_D\;\leq\;
C\tau^{3-\alpha},
\]
as desired.

\end{document}